\documentclass[twoside,11pt]{article}


%

%
%
%

\usepackage{jmlr2e}


\usepackage{graphicx}
\usepackage{amsmath}
\usepackage{amsfonts}
\usepackage{amssymb}
\usepackage{mathrsfs}
\usepackage{enumerate}
\usepackage{color}
\usepackage{xcolor}
\usepackage{enumitem}
\usepackage{hyperref}
\hypersetup{
    colorlinks=true, 
    linktoc=all,     
    linkcolor=blue,  
}

\newcommand{\udef}{\mathrel{\mathop:}=}

\newcommand{\dd}{{\rm d}}
\newcommand{\N}{\mathbb{N}}
\newcommand{\de}{\mathrm{d}}
\newcommand{\partialc}{\partial^c} 
\newcommand{\parder}[2]{\frac{\partial#1}{\partial#2}}

\newcommand{\abs}[1]{\lvert#1\rvert}

\DeclareMathOperator{\cl}{cl}
\newcommand{\rev}[1]{{\color{black}#1 }}


\usepackage{lastpage}
\jmlrheading{23}{2022}{1-\pageref{LastPage}}{1/23}{}{23-0000}{F.V. Difonzo, V. Kungurtsev and J. Mare{\v c}ek}


\ShortHeadings{Stochastic Langevin Differential Inclusions}{F.V. Difonzo, V. Kungurtsev and J. Mare{\v c}ek}
\firstpageno{1}

\begin{document}

\title{Stochastic Langevin Differential Inclusions with Applications to Machine Learning}

\author{Fabio V. Difonzo, Vyacheslav Kungurtsev, and Jakub Mare{\v c}ek}

\editor{My editor}

\begin{center}
\maketitle
\end{center}

\begin{abstract}
Stochastic differential equations of Langevin-diffusion form have received significant attention, thanks to their foundational role in both Bayesian sampling algorithms and optimization in machine learning. In the latter, they serve as a conceptual model of the stochastic gradient flow in training over-parameterized models. However, the literature typically assumes smoothness of the potential, whose gradient is the drift term. Nevertheless, there are many problems for which the potential function is not continuously differentiable, and hence the drift is not Lipschitz continuous everywhere. This is exemplified by robust losses and Rectified Linear Units in regression problems. In this paper, we show some foundational results regarding the flow and asymptotic properties of Langevin-type Stochastic Differential Inclusions under assumptions appropriate to the machine-learning settings. In particular, we show strong existence of the solution, as well as an asymptotic minimization of the canonical free-energy functional.
\end{abstract}

\section{Introduction}\label{sec:intro}


In this paper, we study the following stochastic differential inclusion,
\begin{equation}\label{eq:langinc}
\de X_t \in -F(X_t) \de t+ \sqrt{2\sigma}\,\de B_t
\end{equation}
wherein $F(x):\mathbb{R}^n\rightrightarrows\mathbb{R}^n$ is a set-valued map. We are particularly interested in the case where
$F(x)$ is the Clarke subdifferential of some continuous tame function $f(x)$. This is motivated by the recent interest in studying Langevin-type diffusions in the context of machine learning applications, both as a scheme for sampling in a Bayesian framework (as spurred by the seminal work~\cite{welling2011bayesian}) and as a model of the trajectory of stochastic gradient descent, with a view to understanding the asymptotic properties of training deep neural networks~\cite{hu2017diffusion}. It is typically assumed that $F(x)$ above is Lipschitz and as such its potential $f(x)$ is continuously differentiable. In many problems of relevance, including empirical risk minimization with robust loss (e.g., $l1$ or Huber) and neural networks with ReLU activations, this is not the case, and yet there is at least partial empirical evidence suggesting that the long-term behavior of a numerically similar operation is similar in its capacity to generate a stochastic process which minimizes a Free Energy associated with the learning problem.

This paper is organized as follows. In Section~\ref{sec:exceptionalset} we study the functional analytical properties of $F(x)$ as it appears in~\eqref{eq:langinc} when it represents a noisy estimate of a subgradient element of an empirical loss function that itself satisfies the conditions of a \emph{definable} potential, especially as it appears in the context of deep learning applications. Note that the research program undertaken relates to a recent conjecture of Bolte and Pauwels \cite[Remark 12]{bolte2021conservative} that suggests the strong convergence of iterates in a stochastic subgradient type sequence to stationary points for this class of potentials. Subsequently, in Section~\ref{sec:sdiexist} we prove that there exists a strong solution to~\eqref{eq:langinc}, confirming the existence of a trajectory in the general case. In this sense, we extend the work of~\cite{leobacher2017strong,LeobacherSteinicke2022} studying diffusions with discontinuous drift to set-valued drift.
Next in Section~\ref{sec:fpandfreenergy} we prove the correspondence of a Fokker-Planck type equation to modeling the probability law associated with this stochastic process, and show that it asymptotically minimizes a free-energy functional corresponding to the loss function of interest, extending the seminal work of~\cite{jordanKinderlehrerOtto1998} which had proven the same result in the case of continuous $F(x)$. We present some numerical results that confirm the expected asymptotic behavior of~\eqref{eq:langinc} in Section~\ref{sec:num} and summarize our findings and their implications in Section~\ref{sec:conc}.



\subsection{Related Work}
Stochastic differential inclusions are the subject of the monograph~\cite{kisielewicz2013stochastic}, which presents the associated background of stochastic differential equations (SDEs) as well as set-valued analysis and differential inclusions, providing a notion of a weak and strong solution to equations of the form ~\eqref{eq:langinc}, and those with more general expressions, especially with respect to the noise term. In this work, and others studying stochastic differential inclusions in the literature, such as, e.g.,~\cite{kisielewicz2009stochastic,kisielewicz2020set}, it is assumed that the set-valued drift term $F(x)$ is Lipschitz continuous. In this paper we are interested in the more general case, where $F(x)$ may not be Lipschitz.

Langevin diffusions have had three distinct significant periods of development. To begin with, the original elegant correspondence of SDEs with semigroups associated with parabolic differential operators was explored in depth by~\cite{stroock2007multidimensional} (first edition published in the 1970s), following the seminal paper of~\cite{ito1953fundamental}. See also~\cite{kent1978}.

Later, they served as a canonical continuous Markovian process with the height of activity on ergodicity theory, the long run behavior of stochastic processes, associated with the famous monograph of~\cite{meyn2012markov}, first appearing in the 1990s. See, e.g.~\cite{meyn1993stability}.

Most recently, Langevin diffusion has been a model for studying the distributional dynamics of noisy training in contemporary machine learning models~\citep{hu2017diffusion}. At this point, we have the works closest to ours. In deep neural networks (DNNs), reconstructed linear units (ReLUs), which involve a component-wise maximum of zero and a linear expression, are standard functional components in predictors appearing in both regression and classification models. Mean-field analyses seek to explain the uncanny generalization abilities of DNNs by considering the distributional dynamics of idealized networks with infinitely many hidden layer neurons by considering both the stochastic equations corresponding to said dynamics, as well as Fokker-Planck-type PDEs modeling the flow of the distribution of the weights in the network. Analyses along these lines in the contemporary literature include~\cite{luo2021phase, shevchenko2021mean}. Although this line of work does use limiting arguments involving stochastic and distributional dynamics, they do not directly consider the Langevin differential inclusion and the potential PDE solution for the distribution, as we do here. 

\textcolor{black}{Next, we mention algorithmically focused papers on gradient-based sampling for potentials with non-smooth components. These fall into two categories: 1) methods that handle structured non-smoothness, such as a composite sum with a convex non-smooth term with a computable prox in~\cite{mou2022efficient,bernton2018langevin,durmus2019analysis,shen2020composite}, and 2) methods that incorporate smoothing techniques of the potential \citep{erdogdu2021convergence,chatterji2020langevin}. On the one hand, we analyze a very general class of nonsmooth potentials, considering the unmodified SDE and associated distribution law. On the other hand, the generality of the setting we consider is not amenable to the derivation of quantitative mixing times.}

We finally note that the Langevin diffusion as minimizing free energy even in the case of non-smooth potentials should not be surprising. In fact, in the seminal book of \cite{ambrosio2005gradient}, it is shown that a Clarke subgradient of a regular function still exhibits a locally minimizing flow structure for a functional potential on a probability space. Thus, at least locally, one may hope the necessary propertiesld.

\section{Tame functions, o-minimal structures and the exceptional set}\label{sec:exceptionalset}

Deep learning raises a number of important questions at the interface of optimization theory and stochastic analysis \citep{bottou2018optimization}. 
In particular, there are still major gaps in our understanding of the applications of plain stochastic gradient descent (SGD) in deep learning,
leaving aside the numerous related recent optimization algorithms for deep learning \citep{schmidt2021descending, davis2020stochastic}.
A particularly challenging aspect of deep learning is the composition of functions that define the objective landscape. These functions are typically recursively evaluated piecewise nonlinear maps. The nonlinearities are due to the sigmoidal function, exponentiation, and related operations, which are neither semialgebraic nor semianalytic, and the piecewise nature of the landscape is due to the common appearance of component-wise max.

We consider Euclidean space $\mathbb{R}^n$ with the canonical Euclidean scalar product $\left\langle \cdot,\cdot\right\rangle$
and a locally Lipschitz continuous function $f \colon \mathbb{R}^n \to \mathbb{R}$. For 
any $x \in \mathbb{R}^n$,
the Clarke subgradient of $f$ \citep{clarke1990optimization}:
\begin{equation}\label{eq:ClarkeSubgradient}
\partialc f(x) = \mathrm{conv} \left\{ v \in \mathbb{R}^n\,:\, \exists  y_k \underset{k \to \infty}{\longrightarrow} x \text{ with } y_k \in \mathbb{R},\, v_k = \nabla f(y_k) \underset{k \to \infty}{\longrightarrow} v \right\}.
\end{equation}

Following a long history of work \cite[e.g.]{macintyre1993finiteness}, we utilize a lesser known class of definable functions, which are known \cite{van1994elementary} to include restricted analytic fields with exponentiation. We refer to 
Macintyre, McKenna, and van den Dries \cite{macintyre1983elimination} and 
Knight, Pillay, and Steinhorn \cite{knight1986definable}
for the original definitions, and to  
van den Dries-Miller \cite{van1996geometric,van1998tame} and Coste \cite{coste1999introduction} for excellent book-length surveys. In particular, we use the following:

\begin{definition}[Structure, cf.  \cite{pillay1986definable}]
A {\em structure} on $(\mathbb{R},+,\cdot)$ is a collection of sets $\mathcal{O} = (\mathcal{O}_p)_{p \in \mathbb{N}}$, where each $\mathcal{O}_p$ is a family of subsets of $\mathbb{R}^p$ such that for each $p\in \mathbb{N}$:
\begin{enumerate}
\item  $\mathcal{O}_{p}$ contains the set $\{x\in \mathbb{R}^p: g(x)=0\}$, where $g$ is  a polynomial on $\mathbb{R}^p$, i.e., $g \in \mathbb{R}[X_1, X_2, \ldots, X_p]$, and the set is
often known as the family of real-algebraic subsets of $\mathbb{R}^p$;
\item if any $A$ belongs to $\mathcal{O}_p$, then $A\times \mathbb{R}$ and $\mathbb{R}\times A$ belong to $\mathcal{O}_{p+1}$;
\item if any $A$ belongs to $\mathcal{O}_{p+1}$, then $\pi(A) \in \mathcal{O}_{p}$, where  $\pi\colon\mathbb{R}^{p+1}\to\mathbb{R}^{p}$ is the coordinate or ``canonical'' projection onto $\mathbb{R}^p$, i.e., the projection to the first $p$ coordinates;
\item $\mathcal{O}_p$ is stable by complementation, finite union, finite intersection and contains $\mathbb{R}^p$, which defines a Boolean algebra of subsets of $\mathbb{R}^p$.
\end{enumerate}
\end{definition}

\begin{definition}[Definable functions, cf. \cite{van1996geometric}]
A {\em structure} on $(\mathbb{R},+,\cdot)$ is called {\em o-minimal} when the elements of $\mathcal{O}_{1}$ are exactly the finite unions of (possibly infinite) intervals and points.
Sets $A \subseteq \mathbb{R}^p$ belonging to an o-minimal structure $\mathcal{O}_{p}$, for some $p\in \mathbb{N}$, are called {\em definable in the o-minimal structure} $\mathcal{O}$.
One often shortens this to {\em definable} if the structure $\mathcal{O}$ is clear from the context.
A set-valued mapping is said to be definable in $\mathcal{O}$, whenever its graph is definable in $\mathcal{O}$.
\end{definition}

A subset of $\mathbb{R}^n$ is called {\em tame} if there exists an o-minimal structure such that the subset is definable in the o-minimal structure; \textcolor{black}{further, a function is called \emph{tame} if so is its graph}. Notice that this notion of tame geometry goes back to \emph{topologie mod{\' e}r{\' e}e} of  \cite{grothendieck1997around}.

Next, we consider two more regularity properties. First, we consider functions that have conservative set-valued fields of \cite{bolte2021conservative}, or equivalently, are path differentiable \citep{bolte2021conservative}.
This includes convex, concave, Clarke regular, and Whitney stratifiable functions. 

\begin{definition}[Conservative set-valued fields of \cite{bolte2021conservative}]
Let $D: \mathbb{R}^{n} \rightrightarrows \mathbb{R}^{n}$ be a set-valued map. $D$ is a {\em conservative (set-valued)  field} whenever it has closed graph, nonempty compact values and for any absolutely continuous loop $\gamma \colon [0,1] \to \mathbb{R}^n$, that is   $\gamma(0) = \gamma(1)$, we have
\begin{align*}
\int_0^1 \max_{v \in D(\gamma(t))} \left\langle \dot{\gamma}(t), v \right\rangle {\rm d} t = 0
\end{align*}
in the Lebesgue sense.
\label{def:conservativeMapForF}
\end{definition}

\begin{definition}[Potential function of  \cite{bolte2021conservative}]
Let $D$ be a {\em conservative (set-valued)  field}. 
For any $\gamma$ absolutely continuous with $\gamma(0) = 0$ and $\gamma(1)=x$,
any function $f$ defined as  \begin{eqnarray}
f(x)& = & f(0)+\int_0^1 \max_{v \in D(\gamma(t))} \left\langle \dot{\gamma}(t), v \right\rangle \dd t \label{pot1}\\
&= & f(0)+\int_0^1 \min_{v \in D(\gamma(t))} \left\langle \dot{\gamma}(t), v \right\rangle \dd t \label{pot2}\\
& = & f(0)+\int_0^1 \left \langle \dot{\gamma}(t), D(\gamma(t)) \right\rangle \dd t \label{pot3}\end{eqnarray}
is called a {\em potential function for~$D$}. We shall also say that {\em $D$ admits $f$ as a potential} or that {\em $D$ is a conservative field for~$f$}.
\rev{
Let us note that $f$ is well defined and unique up to a constant.
}
\end{definition}

The second notion of regularity, which we consider, is the notion of piecewise Lipschitzianity on $\mathbb{R}^{n}$ of \cite{leobacher2017strong}.
The associated exception(al) set \citep{LeobacherSteinicke2022} is the subset of the domain of that function where the function is not Lipschitz. We recall the definition here for convenience, and we state it for set-valued maps. 

\begin{definition}\label{def:piecewiseLipschitz}
A function $f:\mathbb{R}^{n}\to\mathbb{R}^{n}$ is \emph{piecewise Lipschitz continuous} if there exists a hypersurface $\Theta$, which we call the \emph{exceptional set} for $f$, such that $\Theta$ has finitely many connected
components $\theta_{i}$, $i=1,2,\ldots$, such that $f\big\vert_{\mathbb{R}^{n}\setminus\Theta}$ is intrinsic Lipschitz (cf. \cite[Definition 3.2]{leobacher2017strong}).

A set-valued function $F:\mathbb{R}^n\rightrightarrows\mathbb{R}^n$ is \emph{piecewise Lipschitz continuous} if there exists an exceptional set $\Theta$, defined analogously to single-valued functions, such that $F$ is Lipschitz on $\mathbb{R}^n\setminus\Theta$ with respect to Haudsdorff $H$-metric (cf. \cite{baierFarkhi2013}).
\end{definition}


Let us recall that, given a set-valued map with compact convex values $F:\mathbb{R}^n\rightrightarrows\mathbb{R}^n$, we say that $F$ is $H$-Lipschitz, or just Lipschitz, if and only if there exists a constant $l>0$ such that for any $x,y\in\mathbb{R}^n$ we have
\[
\rho_{H}(F(x),F(y))\leq l\|x-y\|,
\]
where, for arbitrary compact sets $A,B\subseteq\mathbb{R}^n$,
\[
\rho_H(A,B)\udef\max\{\max_{a\in A}\textrm{dist}(a,B),\max_{b\in B}\textrm{dist}(b,A)\},
\]
with $\textrm{dist}(a,B)\udef\min_{b\in B}\|a-b\|_2$.

\begin{definition}[\cite{van1996geometric,bolte2007clarke}] \label{def:stratification}
 A $C^r$ stratification of a closed (sub)manifold $M$ of $\mathbb{R}^n$ is a locally finite partition $(M_i)_{i\in I}$ of $M$ into $C^r$ submanifolds 
 (called \emph{strata})
 having the property that for $i\neq j$, 
 $\cl (M_i) \cap M_j\neq \emptyset$ implies that $M_j$ is entirely contained in $\cl(M_i) \setminus M_i$ (called the \emph{frontier} of $M_i$) and $\dim(M_j) < \dim(M_i)$. \\
 Moreover, a $C^r$ stratification $(M_i)_{i\in I}$ of $M$ has the Whitney-(a) property if, for each $x\in M_i\cap M_j$ (with $i\neq j$) and for each sequence $\{x_k\}\subseteq M_i$ we have
 \[
 \begin{cases}
\; \lim_{k\to\infty} x_k = x, &  \\
\; \lim_{k\to\infty} T_{x_k}M_i = \mathcal{T}, & 
 \end{cases}
\implies T_x M_j\subseteq\mathcal{T},
\]
where $T_x M_j$ (respectively, $T_{x_k}M_i$) denotes the tangent space of the manifold $M_j$ at $x$ (respectively, of $M_i$ at $x_k$) and the convergence in the second limit is in the sense of the standard topology of the Grassmannian bundle of the $\dim M_i-$planes in $T_xM$, $x\in M_i$ (see \cite{MatherNotes}). \\
If, moreover, a Whitney stratification $(M_i)_{i\in I}$ satisfies for all $i\in I$ and $x\in M_i$ the transversality condition
\begin{equation}\label{eq:H}
e_{n+1}\notin T_x M_i,
\end{equation}
where $e_{n+1} = \begin{bmatrix} 0 & \cdots & 0 & 1\end{bmatrix}^\top\in\mathbb{R}^{n+1}$, then it is called a \emph{nonvertical} Whitney stratification.
 
 A function $f:\mathbb{R}^n\to\mathbb{R}$ is said to be \emph{stratifiable}, in any of its connotations, if the graph of $f$, denoted by $\textrm{Graph}(f)$, admits a corresponding $C^r$ stratification.
\end{definition}

\begin{remark}\label{rem:stratification}
Let us note that, from Definition \ref{def:stratification}, if $(M_i)_{i\in I}$ is a stratification of a stratifiable submanifold $M$, then it follows that $M=\bigcup_{i\in I}M_i$. Therefore, by the Hausdorff maximality principle, there must exist $\overline{i}\in I$ such that $\dim M_{\overline{i}}=\max_{i\in I}\dim M_i$ and, thus, $M=\cl{M_{\overline{i}}}$ since all the other subspaces $M_i,\,i\neq\overline{i}$ have zero Lebesgue measure. The same argument holds, a fortiori, for every finite subset $J\subseteq I$.
\end{remark}

In this paper, we establish the existence guarantees of strong solutions to the equation~\eqref{eq:langinc} and study the PDE describing the flow of the probability mass of $X(t)$. To set up the subsequent exposition, we must first establish the groundwork of linking Whitney stratifiable potential functions, which describe the loss landscape of neural networks and other statistical models on the one hand, to set valued piecewise Lipschitz continuous maps, which describe the distributional flow of stochastic subgradient descent training, modeled by~\eqref{eq:langinc}, on the other.

However, in order to do so, we must make an additional assumption that limits the local oscillation and derivative growth of the potential; specifically, we assume that $f$, the potential of $F$, has a bounded variation. It can be seen that the standard activation and loss functions that appear in neural network training satisfy this condition. 
\begin{theorem}\label{thm:FpwLip}
Let $F \colon \mathbb{R}^n \rightrightarrows \mathbb{R}^n$ be a definable conservative field that admits a tame potential $f\colon \mathbb{R}^n \to \mathbb{R}$ with bounded variation. Let $F$ be Lipschitz on $\mathbb{R}^n\setminus B_\delta(0)$ with constant $L$, for some $L,\delta>0$. Then  $F$ is piecewise Lipschitz continuous. 
\end{theorem}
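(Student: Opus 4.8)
The plan is to read the exceptional set straight out of the o-minimal structure underlying $f$ and then check the two requirements of Definition~\ref{def:piecewiseLipschitz} in turn: that the bad set is a hypersurface with finitely many components, and that $F$ is $H$-Lipschitz on its complement. Since $f$ is tame, it is definable in some o-minimal structure, and the cell-decomposition and stratification theorems for such structures furnish a finite partition of $\mathbb{R}^n$ into definable $C^r$ strata $(M_i)_{i\in I}$, with $I$ finite, on each of which $f$ is $C^r$ (Definition~\ref{def:stratification}). On the open, $n$-dimensional strata $f$ is $C^r$ on a full neighborhood, so there $\partialc f$ collapses to the singleton $\{\nabla f\}$ and any conservative field for $f$ must agree with it; hence $F$ is single-valued and smooth on the union of the top strata, and on that set the Hausdorff distance $\rho_H$ between values of $F$ reduces to the Euclidean distance between the corresponding gradients.

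Next I would set $\Theta$ to be the union of the strata of dimension at most $n-1$. This is a definable set of dimension at most $n-1$, and by the finiteness properties of o-minimal structures it is a finite union of $C^r$ submanifolds with finitely many connected components, which is exactly the hypersurface required by Definition~\ref{def:piecewiseLipschitz}. By Remark~\ref{rem:stratification} its complement is, up to a Lebesgue-null set, the union of the top strata, so $\mathbb{R}^n\setminus\Theta$ is precisely the locus on which $F=\{\nabla f\}$ is single-valued and smooth and on which the $H$-Lipschitz condition therefore amounts to ordinary Lipschitz continuity of $\nabla f$.

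It then remains to certify one Lipschitz constant for $\nabla f$ on $\mathbb{R}^n\setminus\Theta$ with respect to the intrinsic metric of Definition~\ref{def:piecewiseLipschitz}. Outside $B_\delta(0)$ this is handed to us with constant $L$. The whole difficulty is thus concentrated inside the compact ball $\overline{B_\delta(0)}$, and this is where I expect the main obstacle to lie: on an open stratum $\nabla f$ is automatically locally Lipschitz, but a uniform intrinsic bound up to $\Theta$ requires controlling the oscillation of $\nabla f$ as one approaches the lower strata, i.e.\ precluding gradient blow-up of the kind exhibited by definable-but-not-$C^{1,1}$ potentials. This is the role I would assign to the bounded-variation hypothesis on $f$: combined with definability, which forces $\nabla f$ to be piecewise monotone with only finitely many monotonicity changes, the finite total variation is meant to bound the increments of the gradient field across each stratum and thereby yield a uniform intrinsic-Lipschitz constant $l_\delta$ on $\overline{B_\delta(0)}\setminus\Theta$. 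Taking $l=\max\{L,l_\delta\}$, and if necessary absorbing $\partial B_\delta(0)$ into $\Theta$ so the interior and exterior estimates need not be reconciled across the sphere, would complete the verification.

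I would flag the passage from ``finite variation plus piecewise monotonicity'' to ``uniform Lipschitz increment'' as the step demanding the most care, since finite variation by itself controls $\nabla f$ only in an averaged ($L^1$) sense rather than pointwise. The leverage I would rely on to close this gap is exactly the definable structure, which upgrades the qualitative regularity of $f$ to a finite combinatorial description of its strata and of the monotone branches of $\nabla f$; this is what should convert the global integral bound supplied by bounded variation into the stratum-wise uniform constant needed for intrinsic Lipschitzness.
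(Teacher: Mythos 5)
Your skeleton matches the paper's: both stratify the tame potential, identify $F$ with $\{\nabla f\}$ on a full-measure set, build the exceptional set out of the low-dimensional/singular locus, and let the hypothesis handle everything outside $B_\delta(0)$. But the step you yourself flag --- producing a uniform Lipschitz constant for $\nabla f$ on the top strata inside $\overline{B_\delta(0)}$ --- is precisely the analytic content of the theorem, and your proposal does not prove it; it only names the hypothesis (bounded variation) that is ``meant to'' yield it. Worse, the mechanism you propose for closing the gap cannot work as stated: bounded variation of $f$ together with definability and piecewise monotonicity of $\nabla f$ does not bound increments of $\nabla f$ pointwise. Take $f(x)=x^{3/2}$ on $[0,1]$, suitably extended: it is semialgebraic, $C^1$, monotone and of bounded variation, with $\nabla f(x)=\tfrac{3}{2}\sqrt{x}$ itself definable, monotone and BV, yet $\nabla f$ fails to be Lipschitz on $(0,\epsilon)$ for every $\epsilon>0$. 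So no ``finite combinatorial description of monotone branches'' upgrades the $L^1$ control coming from BV to the stratum-wise uniform constant you need; an additional regularity input is unavoidable.

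The paper supplies that input differently. From tameness and local Lipschitzness (the latter a consequence of conservativity, via \cite{bolte2021conservative}) it invokes semismoothness of $f$ (\cite[Theorem 1]{BolteEtAl2009}) together with the \emph{nonvertical} $C^r$ Whitney stratification of \cite[Corollary 9]{bolte2007clarke}, argues that the directional derivatives restricted to each stratum are $C^{r-1}$ and bounded on $B_\delta(0)$ (this is where BV enters --- for boundedness only, a much weaker use than yours), and concludes that the Riemannian gradients $\nabla_R f$ are Lipschitz stratum by stratum; the projection formula $\mathrm{Proj}_{T_x M_j}\partial^c f(x)\subseteq\{\nabla_R f(x)\}$ and \cite[Theorem 1 and Corollary 1]{bolte2021conservative} then pin $F$ to $\{\nabla f\}$ off a null set $S$, which by compactness is covered by finitely many components $\theta_k$ adjoined to the exceptional set. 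This last point exposes a second, smaller gap in your write-up: conservativity identifies $F$ with the gradient only almost everywhere, so $F$ may remain genuinely set-valued on a null set $S$ that need not lie inside your $\Theta$ (the union of the lower-dimensional strata). You must enlarge the exceptional set to absorb $S$, as the paper does, rather than assert single-valuedness of $F$ off the lower strata.
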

\begin{proof}
As $f$ is tame, by definition, it is also definable. Since $F$ is conservative, from \cite{bolte2021conservative}, $f$ is locally Lipschitz and, since it is tame, \cite[Theorem 1]{BolteEtAl2009} implies that $f$ is semismooth. Therefore, by \cite[Corollary 9]{bolte2007clarke}, letting $r\geq2$ be an arbitrarily fixed integer, $f$ admits a nonvertical $C^r$ Whitney stratification $(M_i)_{i\in I}$. With abuse of notation, let $(M_i)_{i\in I}$ denote the stratification relative to the domain of $f$. Therefore, due to local finiteness, there must exist a maximal finite subset of indices $J\subseteq I$ such that 
\[
B_\delta(0)\cap M_j\neq\emptyset,\quad j\in J.
\]
Since $f$ is semismooth we deduce that its directional derivative $f'(\cdot;v)$ is $C^{r-1}$ for all $v\in\mathbb{R}^n$ and, since $f\in\textrm{BV}(\mathbb{R}^n)$, $f'(\cdot;v)$ is bounded on $B_\delta(0)$; moreover, for all $j\in J$, letting $x\in B_\delta(0)\cap M_{j}$ and $v\in \mathcal{T}_x(B_\delta(0)\cap M_{j})$ it holds that $f'(x,v)$ is Lipschitz continuous with respect to $x$, restricted to $B_\delta(0)\cap M_{j}$. Hence, 
the Riemannian gradient $\nabla_R f(x)$ is Lipschitz on $B_\delta(0)\cap M_{j}$, $x\in M_j$, $j\in J$, since $\nabla_R f(x)$ is the restriction of the directional derivative to tangent directions onto $M_j$ \citep{bolte2007clarke}. Let us note that the Clarke subgradient of $f$ at $x$ is such that
\[
\textrm{Proj}_{T_x M_j}\partialc 
f(x)\subseteq\{\nabla_R f(x)\},
\]
where $M_j$ is the stratum such that $x\in M_j$, $j\in J$ (see  \cite[Proposition 4]{bolte2007clarke}).\\
Now, on the account of Remark \ref{rem:stratification}, for some $\overline{j}_x\in J$,
we have $\dim M_{\overline{j}_x}=\dim B_{\delta}(0)=n$. By compactness, there exists a finite covering of $B_{\delta}(0)$ of maximal dimension, denoted by $(M_{\overline{j}})_{\overline{j}\in\overline{J}}$ for some $\overline{J}\subseteq J$, such that $B_{\delta}(0)\subseteq\overline{M}$, where $\overline{M}\udef\bigcup_{\overline{j}\in\overline{J}}M_{\overline{j}}$. Therefore $\nabla_R f(x)=\nabla f(x)$ on $B_{\delta}(0)$.

Then, as a consequence of \cite[Theorem 1]{bolte2021conservative}, there exists a zero-measure set $S\subseteq\mathbb{R}^n$ such that $F(x)=\{\nabla f(x)\}$ for all $x\in B_{\delta}(0)\setminus S$. Now, for $x\in B_{\delta}(0)\setminus S$, from \cite[Corollary 1]{bolte2021conservative}, we have that 
\[
\partialc f(x)\subseteq\textrm{conv}F(x)=\textrm{conv}\{\nabla f(x)\}=\{\nabla f(x)\},
\]
so that
\[
\textrm{Proj}_{T_x M_j}\partialc 
f(x)\subseteq\textrm{Proj}_{T_x M_j}\{\nabla f(x)\}=\{\nabla f(x)\}.
\]
It then follows that $\nabla f(x)$, and so $F$, 
is Lipschitz on $B_\delta(0)\setminus S$. Since $B_\delta(0)$ is compact, there exists a finite family $\{\theta_k\}_{k\in K}$ such that $B_\delta(0)\cap S\subseteq\cup_{k\in K}\theta_k$. Thus $F$ is Lipschitz on $B_\delta(0)\setminus\cup_{k\in K}\theta_k$, and the claim is proved.
\end{proof}

\begin{example}
In case $F \colon \mathbb{R}^n \rightrightarrows \mathbb{R}^n$ is a definable conservative field such that none of its tame potentials $f\colon \mathbb{R}^n \to \mathbb{R}$ has bounded variation, then Theorem \ref{thm:FpwLip} does not hold. In fact, let us consider
\rev{
\[
F(x)\udef
\begin{cases}
    0, & x\in(-\infty,0), \\
    [0,1], & x=0, \\
    \frac{1}{x}, & x\in(0,1].
\end{cases}
\]
}
A simple computation provides that $F$ is a definable conservative field. 
\rev{
Moreover, its unique potential, up to constants, is not of bounded variation, since
\[
\int_0^{+\infty}F(x)\,\de x=-\infty. 
\]
In this case, Theorem \ref{thm:FpwLip} does not hold, as $F$ is not Lipschitz, and a fortiori not piecewise Lipschitz.
}
\end{example}


\section{Existence and Uniqueness of Solution to the SDI}\label{sec:sdiexist}

In this section we will prove that \eqref{eq:langinc} admits a strong solution. More precisely, it will be proven that there exists a suitable selection of $F(X_{t})$ such that the corresponding SDE has a strong solution: this will in turn imply that the original stochastic differential inclusion has a solution as well. Our result will rely on an existence and uniqueness pertaining to SDEs with discontinuous drift in \cite{leobacher2017strong}. 

\subsection{Piecewise Lipschitz selections of upper semicontinuous set-valued maps}\label{sec:results}

In our setting, assuming that $F$ is the Clarke subdifferential of some continuous tame function guarantees that $F$ is an upper semi-continuous set-valued map. We further assume that $F$ is bounded with compact convex values. Our aim is to prove that, under these assumptions, $F$ has a piecewise Lipschitz selection.

We are going to need the following results:
\begin{theorem}[Theorem 9.4.3 in \cite{aubinFrankowska}]\label{thm:steinerSelection} Consider a Lipschitz set-valued map $F$ from a metric space to nonempty closed convex subsets of $\mathbb{R}^{n}$. Then $F$ has a Lipschitz selection, called \emph{Steiner selection}.
\end{theorem}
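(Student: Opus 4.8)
The plan is to exhibit the selection explicitly via the classical Steiner point construction and then show that it inherits Lipschitz continuity from $F$. Throughout I work, as in the standing assumptions of this subsection, with nonempty compact convex values, so that all Hausdorff distances and support functions below are finite. For a nonempty compact convex set $K\subseteq\mathbb{R}^n$, recall its support function $h_K(p)\udef\max_{y\in K}\langle p,y\rangle$, which is convex, positively homogeneous, and globally Lipschitz with constant $\max_{y\in K}\|y\|$. By Rademacher's theorem $h_K$ is differentiable almost everywhere, and at each point $p$ of differentiability $\nabla h_K(p)$ is exactly the (then unique) maximizer of $\langle p,\cdot\rangle$ over $K$; in particular $\nabla h_K(p)\in K$ wherever it exists.

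I would then define the Steiner selection by
\[
s(K)\udef\frac{1}{\abs{B_1(0)}}\int_{B_1(0)}\nabla h_K(p)\,\de p,
\]
the integral being well defined since $\nabla h_K$ is bounded and defined a.e. First I would verify the \emph{selection property} $s(K)\in K$: the map $p\mapsto\nabla h_K(p)$ takes values in the compact convex set $K$ for a.e.\ $p$, so its average against the normalized Lebesgue measure on $B_1(0)$, which is a probability measure, again lies in $K$, since the barycenter of a measure supported in a closed convex set belongs to that set.

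For the \emph{Lipschitz property} I would pass to a boundary representation. Applying the divergence theorem coordinatewise to the field $h_K e_i$ gives
\[
s(K)=\frac{1}{\abs{B_1(0)}}\int_{\partial B_1(0)} h_K(p)\,p\,\de\mathcal{H}^{n-1}(p),
\]
because the outward unit normal on the unit sphere is $p$ itself. Combined with the standard comparison $\abs{h_A(p)-h_B(p)}\le\rho_H(A,B)\,\|p\|$, this yields, for compact convex $A,B$,
\[
\|s(A)-s(B)\|\le\frac{1}{\abs{B_1(0)}}\int_{\partial B_1(0)}\abs{h_A(p)-h_B(p)}\,\de\mathcal{H}^{n-1}(p)\le\frac{\abs{\partial B_1(0)}}{\abs{B_1(0)}}\,\rho_H(A,B)=n\,\rho_H(A,B),
\]
so $K\mapsto s(K)$ is $n$-Lipschitz for $\rho_H$. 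Composing with the $l$-Lipschitz map $F$, the single-valued map $x\mapsto s(F(x))$ is $nl$-Lipschitz and, by the selection property, satisfies $s(F(x))\in F(x)$, giving the desired Lipschitz selection.

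The main obstacle, as I see it, is in the two measure-theoretic justifications rather than in the estimates: first, identifying $\nabla h_K(p)$ with the exposed point of $K$ in direction $p$ and deducing $s(K)\in K$, which rests on the subdifferential calculus of support functions; and second, justifying the divergence-theorem step for the merely Lipschitz (not $C^1$) integrand $h_K$, handled by mollification or by invoking the divergence theorem for $W^{1,\infty}$ fields. Once these are in place, the Hausdorff-to-support-function comparison makes the Lipschitz constant fall out cleanly.
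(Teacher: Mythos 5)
The paper never proves this statement: it is imported verbatim as Theorem 9.4.3 of Aubin--Frankowska, so there is no internal proof to compare against. Judged on its own, your construction is precisely the classical Steiner point argument that the cited source itself uses: define $s(K)$ as the average of $\nabla h_K$ over the unit ball, get $s(K)\in K$ from the barycenter (or separation) argument, pass to the sphere integral by Gauss--Green for Lipschitz integrands, and combine $|h_A(p)-h_B(p)|\le\rho_H(A,B)\|p\|$ with $|\partial B_1(0)|=n\,|B_1(0)|$ to obtain the Lipschitz constant $n$ for $s$, hence $nl$ for $x\mapsto s(F(x))$. All of these steps are correct as written for \emph{compact} convex values, and the two measure-theoretic points you flag (gradient of the support function equals the exposed point a.e.; divergence theorem for $W^{1,\infty}$ fields) are exactly the right ones and are handled correctly.

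The one genuine gap is scope. The statement, like Aubin--Frankowska's Theorem 9.4.3, is for nonempty \emph{closed} convex values, possibly unbounded, whereas your construction needs compactness: for unbounded closed convex $K$ the support function $h_K$ is $+\infty$ on a set of directions of positive measure, $\max_{y\in K}\|y\|$ is infinite, and $s(K)$ is simply undefined. The textbook closes this by a preliminary truncation: fix $x_0$ and $y_0\in F(x_0)$, replace $F$ by $G(x)=F(x)\cap \bar{B}\left(y_0,\;l\,d(x,x_0)+1\right)$, which is nonempty (by the Lipschitz estimate on $F$), compact convex valued, and still Lipschitz (intersection of a Lipschitz map with a ball whose radius varies in a Lipschitz way), and then applies the Steiner point to $G$; any selection of $G$ is a selection of $F$. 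You explicitly restrict to compact values and justify this by the standing assumptions of Section 3.1 of the paper ($F$ bounded with compact convex values; indeed the paper's $H$-Lipschitz notion is only defined for compact-valued maps), so your argument covers every use the paper actually makes of the theorem --- but as a proof of the statement as worded, it is missing this truncation step.
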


\begin{theorem}[Kirszbraun's Theorem, cf. \cite{federer}]\label{thm:kirszbraun}
If $S\subseteq\mathbb{R}^n$ and $f:S\to\mathbb{R}^n$ is Lipschitz, then $f$ has a Lipschitz extension $g:\mathbb{R}^n\to\mathbb{R}^n$ with the same Lipschitz constant.
\end{theorem}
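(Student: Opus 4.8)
The plan is to reduce the extension problem to a purely geometric statement about intersections of Euclidean balls, and then to pass from that statement to a global extension by a maximality argument. First I would normalize: by rescaling the target (replace $f$ by $f/L$ and the eventual extension $g$ by $Lg$), assume without loss of generality that $f$ is $1$-Lipschitz, i.e.\ nonexpansive, so the goal becomes to produce a nonexpansive $g\colon\mathbb{R}^n\to\mathbb{R}^n$ agreeing with $f$ on $S$. The whole difficulty concentrates in a \emph{one-point extension} step: given a nonexpansive $f$ defined on an arbitrary set $D$ and a point $x_0\notin D$, I must find a value $y_0$ with $\|y_0-f(s)\|\le\|x_0-s\|$ for every $s\in D$. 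Writing $r_s\udef\|x_0-s\|$, this is precisely the claim that $\bigcap_{s\in D}\overline{B}(f(s),r_s)\neq\emptyset$.

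The heart of the matter is the finite geometric lemma (Kirszbraun's intersection property): if $\{x_i\}$ and $\{y_i\}$ are finite families with $\|y_i-y_j\|\le\|x_i-x_j\|$ for all $i,j$, and if the balls $\overline{B}(x_i,r_i)$ have a common point, then so do the balls $\overline{B}(y_i,r_i)$. I would prove this by considering the smallest dilation $\lambda^\ast\ge 0$ for which $\bigcap_i\overline{B}(y_i,\lambda^\ast r_i)$ is nonempty. Strict convexity of Euclidean balls makes the intersection at $\lambda^\ast$ a single point $y_0$, and a separating-hyperplane argument shows that $y_0$ lies in the convex hull of the active centers $\{y_i:\|y_0-y_i\|=\lambda^\ast r_i\}$, say $y_0=\sum_i\mu_i y_i$ with $\mu_i\ge 0$, $\sum_i\mu_i=1$ (otherwise $y_0$ could be moved to shrink $\lambda^\ast$). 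Expanding the identity $\|\sum_i\mu_i(y_i-y_0)\|^2=0$ and the inequality $\|\sum_i\mu_i(x_i-x_0)\|^2\ge 0$ in terms of pairwise distances, using $\|x_0-x_i\|=r_i$ and $\|y_0-y_i\|=\lambda^\ast r_i$ on the active set, and finally invoking $\|y_i-y_j\|\le\|x_i-x_j\|$, yields the chain $(\lambda^\ast)^2\sum_i\mu_i r_i^2=\tfrac12\sum_{i,j}\mu_i\mu_j\|y_i-y_j\|^2\le\tfrac12\sum_{i,j}\mu_i\mu_j\|x_i-x_j\|^2\le\sum_i\mu_i r_i^2$, whence $\lambda^\ast\le 1$. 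This inner-product computation, which is exactly where the Euclidean (Hilbert-space) structure is indispensable and beyond which Kirszbraun's theorem fails for general norms, is the main obstacle.

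Next I would promote the finite lemma to the one-point extension for an arbitrary domain $D$. Applying the lemma to the centers $\{f(s)\}$, their preimages $\{s\}$, and the extra point $x_0$ shows that every finite subfamily of $\{\overline{B}(f(s),r_s)\}_{s\in D}$ has nonempty intersection; that is, the family has the finite intersection property. Since each such ball is compact and convex in $\mathbb{R}^n$, fixing one ball and intersecting the others against it reduces to a family of closed subsets of a compact set with the finite intersection property, so $\bigcap_{s\in D}\overline{B}(f(s),r_s)\neq\emptyset$. Any point $y_0$ in this intersection extends $f$ to $D\cup\{x_0\}$ while preserving nonexpansiveness.

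Finally I would globalize by Zorn's lemma. Order the nonexpansive extensions of $f$ by graph inclusion; every chain has an upper bound given by its union, so a maximal extension $g$ with domain $D^\ast$ exists. If $D^\ast\neq\mathbb{R}^n$, choose $x_0\notin D^\ast$ and apply the one-point extension to obtain a strictly larger nonexpansive extension, contradicting maximality; hence $D^\ast=\mathbb{R}^n$. Undoing the initial rescaling restores the original Lipschitz constant, which completes the proof.
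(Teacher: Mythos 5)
The paper itself gives no proof of this statement---it is quoted as a classical result with a pointer to \cite{federer}---and your argument is precisely the standard proof recorded there, so there is nothing to flag. Your chain of reductions (normalize to the nonexpansive case; reduce to one-point extension, i.e.\ nonemptiness of $\bigcap_{s\in D}\overline{B}(f(s),\|x_0-s\|)$; prove the finite ball-intersection lemma via the minimal dilation $\lambda^\ast$, strict convexity, the separating-hyperplane/convex-hull step, and the inner-product identity giving $\lambda^\ast\le 1$; pass to arbitrary $D$ by the finite intersection property and compactness; globalize by Zorn's lemma) is correct and complete at the level of detail one would expect, including the correct observation that the quadratic identity is exactly where the Hilbert-space structure enters.
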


In the next Theorem, which is the main result of this section, we prove that, under some mild assumptions, the set-valued map $F$ in \eqref{eq:langinc} has a piecewise Lipschitz selection for any suitable compact covering of $\mathbb{R}^n$. 

\rev{
\begin{theorem}\label{thm:locallyLipschitzSelection}
Let $F:\mathbb{R}^{n}\rightrightarrows\mathbb{R}^{n}$ be an upper semi-continuous set-valued map with closed convex values, and piecewise Lipschitz continuous, with exceptional set $\Theta$. Then $F$ has a piecewise Lipschitz selection with exceptional set $\Theta$, arbitrarily smooth on the interior of each connected component. 
\end{theorem}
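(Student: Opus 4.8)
The plan is to argue component by component and then glue. Since $\Theta$ is a hypersurface with finitely many connected components, the open set $\mathbb{R}^n\setminus\Theta$ decomposes into open connected components $\{C_\alpha\}_\alpha$, and on each $C_\alpha$ the restriction $F|_{C_\alpha}$ is Lipschitz with respect to the Hausdorff metric $\rho_H$ while keeping nonempty compact convex values. On each such $C_\alpha$ I would invoke the Steiner selection of Theorem~\ref{thm:steinerSelection} to obtain a selection $s_\alpha\colon C_\alpha\to\mathbb{R}^n$, $s_\alpha(x)\in F(x)$, that is Lipschitz with constant $c_n\,\mathrm{Lip}(F|_{C_\alpha})$, where $c_n$ is the dimensional constant intrinsic to the Steiner-point construction. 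Passing, if desired, to a suitable compact covering of $\mathbb{R}^n$ keeps the number of relevant pieces finite, and since $F$ is Lipschitz on all of $\mathbb{R}^n\setminus\Theta$ the constants are uniformly bounded by $c_n\,\mathrm{Lip}(F)$.

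Next I would assemble the global selection. Define $s(x)=s_\alpha(x)$ for $x\in C_\alpha$, and, since $\Theta$ is a null hypersurface, let $s(x)$ be an arbitrary point of the nonempty set $F(x)$ for $x\in\Theta$. By construction $s$ is a selection of $F$ that is Lipschitz on each component with a uniform constant. This is exactly the statement that $s$ is intrinsic Lipschitz on $\mathbb{R}^n\setminus\Theta$ in the sense of Definition~\ref{def:piecewiseLipschitz}: the intrinsic path distance between two points of a single component dominates their Euclidean distance, while points in distinct components are at infinite intrinsic distance, so the gluing introduces no defect. Hence $s$ is piecewise Lipschitz with the same exceptional set $\Theta$. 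If a clean ambient-Lipschitz representative on each closed piece is convenient, I would extend $s_\alpha$ by Kirszbraun's Theorem~\ref{thm:kirszbraun} without increasing the constant.

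For the smoothness claim I would fix a component $C_\alpha$, work on its $\epsilon$-interior, and regularize by convolution with a standard mollifier $\rho_\epsilon$, setting $s_\alpha^\epsilon=\rho_\epsilon * s_\alpha$; this is $C^\infty$ and satisfies $\abs{s_\alpha^\epsilon(x)-s_\alpha(x)}\le c_n\,\mathrm{Lip}(F)\,\epsilon$. The difficulty is that convolution only places $s_\alpha^\epsilon(x)$ in the Minkowski average $\int\rho_\epsilon(y)F(x-y)\,\dd y$, which lies within Hausdorff distance $\mathrm{Lip}(F)\,\epsilon$ of $F(x)$ but need not be contained in it, so $s_\alpha^\epsilon$ is a priori only an approximate selection. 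To repair this while retaining smoothness I would work at the level of support functions: mollifying $h(x,u)=\max_{v\in F(x)}\langle v,u\rangle$ in the $x$ variable produces a function smooth in $x$ and still convex and positively homogeneous in $u$, namely the support function of that Minkowski average; subtracting the uniform defect $\mathrm{Lip}(F)\,\epsilon$ yields the support function of an inner parallel body contained in $F(x)$, and the Steiner point of this eroded, smoothly varying family is then a genuine $C^\infty$ selection. Iterating in $r$ (or taking $\epsilon$ small) delivers the required arbitrary smoothness on the interior.

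The main obstacle is precisely this last step, and I would expect the write-up to hinge on it: the erosion argument is available only where the values $F(x)$ are full-dimensional, since for lower-dimensional values---in particular singletons, which is the generic situation when $F$ is the Clarke field of a potential that is differentiable off $\Theta$---the inner parallel body is empty and the selection is forced, so no regularization beyond the Lipschitz estimate already obtained is possible (take $F\equiv\{\abs{x}\}$ on $\mathbb{R}$ with $\Theta=\emptyset$). I would therefore anticipate that the smoothness assertion is most naturally read on the interior of each component under the proviso that $F$ carries nonempty interior values there, and the crux of a careful proof is isolating that condition and executing the support-function erosion cleanly; the Lipschitz and exceptional-set conclusions, by contrast, follow directly from Theorems~\ref{thm:steinerSelection} and~\ref{thm:kirszbraun}.
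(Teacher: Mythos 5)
Your construction of the Lipschitz selection is essentially the paper's own proof: the paper takes a finite family of closed sets $R_i$ with $\bigcup_i R_i=\mathbb{R}^n\setminus\Theta$ (you use open connected components, an inessential difference), applies the Steiner selection of Theorem~\ref{thm:steinerSelection} on each piece, extends by Kirszbraun's Theorem~\ref{thm:kirszbraun}, and glues the pieces into a map that is Lipschitz off $\Theta$, hence piecewise Lipschitz with exceptional set $\Theta$ in the sense of Definition~\ref{def:piecewiseLipschitz}; your remark that Euclidean-Lipschitz on a component implies intrinsic Lipschitz is exactly the observation needed there.

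The divergence is in the smoothness step, and there your scrutiny is justified rather than excessive. The paper performs precisely the mollification you anticipated, setting $g_i^\varepsilon$ to be the convolution of the extended Steiner selection $f_i$ with a mollifier, and then asserts that $g_i^\varepsilon(x)\in F(x)$ ``by straightforward computations.'' That assertion is the gap you identified: the mollified map takes values in the set-valued average $\int F(x-y)\varphi^\varepsilon(y)\,\de y$, which lies within Hausdorff distance $\mathrm{Lip}(F)\,\varepsilon$ of $F(x)$ but need not be contained in $F(x)$, so $g_i^\varepsilon$ is in general no longer a selection. Your forced-selection point makes the failure concrete and unrepairable in the generality of the statement: for the intended application $F=\partialc f$, the values are singletons off a null set (the paper's own proof of Theorem~\ref{thm:FpwLip} shows $F=\{\nabla f\}$ a.e.), so every selection agrees with $\nabla f$ there and cannot be smoother than $\nabla f$ is; and your example $F(x)=\{\abs{x}\}$ on $\mathbb{R}$, with $\Theta$ empty or any hypersurface avoiding the origin, satisfies all hypotheses of Theorem~\ref{thm:locallyLipschitzSelection} while its unique selection fails to be differentiable at an interior point of a component. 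So the smoothness clause is not established by the paper's argument and indeed requires an additional hypothesis of the kind you isolate (full-dimensional values making the support-function erosion possible, or the requirement that the nonsmooth points of the forced selection lie in $\Theta$). The part of the theorem that downstream results such as Corollary~\ref{cor:SDI_existenceUniqueness} actually use---existence of a piecewise Lipschitz selection with exceptional set $\Theta$---is exactly the part you proved, by the same route as the paper.
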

\begin{proof}
Let $\{R_i\}_{i=1}^{r}$ be the finite family of closed subsets of $\mathbb{R}^n$ such that 
\[
\bigcup_{i=1}^{r}R_i=\mathbb{R}^n\setminus\Theta.
\]
For each $i=1,\ldots,r$, Theorem \ref{thm:steinerSelection} on $F\big\vert_{R_{i}}$ implies that there exists a finite sequence of equi-Lipschitz, and thus continuous, selection functions $\{f_{i}\}$ where each $f_{i}$ is defined on $R_i$, that is $f_{i}:R_i\to\mathbb{R}^{n}$.

We can now extend each $f_{i}$ on the whole $\mathbb{R}^n$, to some function, still denoted by $f_{i}$, which is Lipschitz with the same constant as the original function's on the account of Kirszbraun's Theorem \ref{thm:kirszbraun}.
\\
Let now $\varepsilon>0$ be given and let $i=1,\ldots,r$ be fixed. Let us consider a partition of unity $\{\varphi^\varepsilon\}$ as in \cite[Lemma 1.2]{Shubin1990}.
Now, following a classical construction (e.g., see \cite{azagraEtAl2007}), we let
\[
g_i^\varepsilon(x)\udef\int_{\mathbb{R}^n}f_i(x)\varphi^\varepsilon(y-x)\,\de y.
\]
It then follows that $g_{i}^{\varepsilon}\in\mathscr{C}^{\infty}(\mathbb{R}^{n})$ is a Lipschitz function, with the same Lipschitz constant as $f_{i}$'s, and is such that $g_i^\varepsilon\in F(x)$ by straightforward computations.
Let us stress that the Lipschitz constant of each $g_{i}^{\varepsilon}$ is independent of $\varepsilon$, so that
$\{g_{i}^{\varepsilon}\}_{\varepsilon>0}$ is equi-Lipschitz on $R_i$.
Let now $x\in\mathbb{R}^n$ and $i_x\in\N$ be unique index such that $x\in R_{i_x}$. 
We then define
\[
f(x)\udef g_{i_{x}}^\varepsilon(x),\quad x\in\mathbb{R}^{n}.
\]
It then follows that $f:\mathbb{R}^{n}\to\mathbb{R}^{n}$ is piecewise Lipschitz on $\mathbb{R}^{n}$ according to Definition \ref{def:piecewiseLipschitz} and its exceptional set is $\Theta$. Obviously $f(x)\in F(x)$, and this proves the claim.
\end{proof}
}

\begin{remark}\label{rem:compactCovering}
Theorem \ref{thm:locallyLipschitzSelection} still holds if we replace $\mathbb{R}^n$ with any $A\subseteq\mathbb{R}^n$: in fact, we can always extend $F$ to an upper semi-continuous set-valued map defined on the whole $\mathbb{R}^n$ by virtue of \cite[Theorem 2.6]{smirnov}.
\end{remark}

Finally, we have the following.
\rev{
\begin{corollary}\label{cor:SDI_existenceUniqueness}
Let $F:\mathbb{R}^n\rightrightarrows\mathbb{R}^n$ be an upper semi-continuous set-valued map with closed convex values, and piecewise Lipschitz continuous with a $\mathscr{C}^3$ exceptional set $\Theta$ of positive reach. Then the SDI \eqref{eq:langinc} admits a strong solution. In particular, for every piecewise Lipschitz selection of $F$, there exists a unique strong solution to the SDI \eqref{eq:langinc}.
\end{corollary}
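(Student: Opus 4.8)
The plan is to reduce the inclusion \eqref{eq:langinc} to a single stochastic differential equation with discontinuous drift and then invoke the existence-and-uniqueness theory of \cite{leobacher2017strong,LeobacherSteinicke2022}. First I would apply Theorem \ref{thm:locallyLipschitzSelection} to $F$: since $F$ is upper semi-continuous with closed convex values and piecewise Lipschitz with exceptional set $\Theta$, it admits a piecewise Lipschitz selection $f\colon\mathbb{R}^n\to\mathbb{R}^n$ whose exceptional set is exactly $\Theta$ and which is arbitrarily smooth on the interior of each connected component of $\mathbb{R}^n\setminus\Theta$. Fix such a selection and consider the SDE
\[
\de X_t = -f(X_t)\,\de t + \sqrt{2\sigma}\,\de B_t.
\]
Because $f(x)\in F(x)$ for every $x$, any strong solution of this SDE satisfies $-f(X_t)\,\de t\in -F(X_t)\,\de t$, hence is automatically a strong solution of \eqref{eq:langinc}. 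Thus it suffices to produce a unique strong solution of the SDE for each such selection.

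Second, I would verify that the drift $b\udef -f$ and the diffusion coefficient $\sqrt{2\sigma}\,I$ meet the hypotheses of the main theorem of \cite{leobacher2017strong} and its refinement in \cite{LeobacherSteinicke2022}. The drift is piecewise Lipschitz with exceptional set $\Theta$ by construction, and $\Theta$ is assumed $\mathscr{C}^3$ with positive reach, which is exactly the geometric regularity those results require of the discontinuity set. The diffusion coefficient is constant and nondegenerate, so the crucial transversality condition --- that the noise have a component pointing across $\Theta$ --- holds trivially: for any unit normal $\nu(x)$ to $\Theta$ we have $\|\sqrt{2\sigma}\,I\,\nu(x)\| = \sqrt{2\sigma} > 0$ uniformly. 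Moreover, since $F$ is assumed bounded throughout this section, the selection $f$ is bounded and hence satisfies a linear-growth bound, ruling out explosion and yielding a global solution.

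With these verifications in place, the cited theorem delivers a pathwise-unique strong solution $X_t$ of the SDE for the chosen selection, which is simultaneously a strong solution of \eqref{eq:langinc}; running this argument over all piecewise Lipschitz selections gives the second assertion. The main obstacle is not the reduction step but the careful matching of our standing hypotheses with the precise assumptions of \cite{leobacher2017strong,LeobacherSteinicke2022}: in particular, confirming that a finite union of $\mathscr{C}^3$ components of positive reach (rather than a single connected hypersurface) is admissible, and that the intrinsic-Lipschitz regularity of the selection produced by Theorem \ref{thm:locallyLipschitzSelection} coincides with the notion used there. Controlling a neighborhood of points where several components of $\Theta$ meet, and ensuring the positive-reach and transversality conditions stay uniform there, is the delicate part; the constant, full-rank diffusion is precisely what keeps this tractable.
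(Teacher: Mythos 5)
Your proposal is correct and follows essentially the same route as the paper: invoke Theorem \ref{thm:locallyLipschitzSelection} to extract a piecewise Lipschitz selection with exceptional set $\Theta$, reduce the inclusion to the SDE driven by that selection, verify the hypotheses of \cite{leobacher2017strong} (the paper checks Assumptions 3.1--3.6, with the constant diffusion handling the transversality condition exactly as you note), and apply \cite[Theorem 3.21]{leobacher2017strong} to obtain a unique global strong solution of the SDE, which is then a strong solution of \eqref{eq:langinc}. The extra verifications you flag (boundedness ruling out explosion, uniformity near intersections of components of $\Theta$) are reasonable points of care but do not change the argument's structure.
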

\begin{proof}
From Theorem \ref{thm:locallyLipschitzSelection} there exists a drift $\mu:\mathbb{R}^{n}\to\mathbb{R}^{n}$, piecewise Lipschitz selection of the set-valued map $F$, which satisfies Assumptions $3.4-3.6$ from \cite{leobacher2017strong}. Moreover, since diffusion is constant is \eqref{eq:langinc}, Assumption $3.3$ therein is satisfied as well, while Assumptions $3.1-3.2$ comes from hypotheses. Therefore, on the account of \cite[Theorem 3.21]{leobacher2017strong} , we obtain that the SDE 
\begin{equation}\label{eq:langSDE}
\de X_t=-\mu(X_t) \de t+ \sqrt{2\sigma}\,\de B_t
\end{equation}
has a unique global strong solution, which in turn represents a strong solution to the stochastic differential inclusion \eqref{eq:langinc}, and this proves the claim.
\end{proof}
}
\rev{
\begin{remark}
 It is worth noticing that the smoothness assumption on the exceptional set $\Theta$ in Corollary \ref{cor:SDI_existenceUniqueness} is unavoidable for resorting to \cite{leobacher2017strong} and, at the best of our knowledge, nothing more is known about existence of strong solutions to \eqref{eq:langSDE} under the assumptions required in this work. However, the topic seems to be under investigation, and more details could be found in \cite{LeobacherSteinicke2022}.   
\end{remark}
}
\rev{
Now, if $F$ is the Clarke subdgradient of a continuous tame function $f$, i.e. $F=\partialc f$ as in \eqref{eq:ClarkeSubgradient}, a stronger uniqueness result can be obtained. 
\begin{theorem}\label{thm:SDI_existenceUniqueness}
Under the assumptions of Corollary \ref{cor:SDI_existenceUniqueness}, the SDI \eqref{eq:langinc} admits a strong solution that is unique almost everywhere.
\end{theorem}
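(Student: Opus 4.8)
The plan is to exploit the additional structure $F=\partialc f$ in order to upgrade the selection-wise uniqueness of Corollary~\ref{cor:SDI_existenceUniqueness} into a genuine almost-everywhere uniqueness. Existence of a strong solution is already furnished by that corollary, so the whole task reduces to showing that any two strong solutions $X^{(1)},X^{(2)}$ obtained from two piecewise Lipschitz selections $\mu_1,\mu_2$ of $F$ (driven by the same Brownian motion $B$ and issued from the same initial datum) agree almost surely.

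First I would localize the multivaluedness of $F$ to a Lebesgue-null set. Since $f$ is continuous and tame, the very definition~\eqref{eq:ClarkeSubgradient} forces $f$ to be locally Lipschitz, hence differentiable Lebesgue-almost everywhere by Rademacher's theorem. Using the same nonvertical $C^r$ Whitney stratification invoked in the proof of Theorem~\ref{thm:FpwLip}, $f$ is in fact $C^1$ on an open, dense, full-measure definable subset, on which it is strictly differentiable and therefore $\partialc f(x)=\{\nabla f(x)\}$ is a singleton. Denote by $N$ the complementary definable set on which $F$ may be genuinely set-valued; then $N$ has zero Lebesgue measure. Because every selection takes values in $F$, the two selections satisfy $\mu_1(x)=\mu_2(x)=\nabla f(x)$ for all $x\notin N$, that is, $\mu_1=\mu_2$ Lebesgue-almost everywhere.

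The decisive step is then to show that redefining the drift on the null set $N$ does not alter the dynamics. Here I would use the nondegeneracy of the constant diffusion $\sqrt{2\sigma}\,I$: for~\eqref{eq:langSDE} with a bounded measurable drift the time-marginals $\mathrm{Law}(X_t)$ are absolutely continuous with respect to Lebesgue measure for every $t>0$ (equivalently, one may apply a Girsanov change of measure to reduce to Brownian motion, or invoke a Krylov-type occupation estimate $\mathbb{E}\int_0^T \mathbf{1}_N(X_t)\,\de t \le C\,\|\mathbf{1}_N\|_{L^{p}}$ for suitable $p$). Since $N$ is null, this yields $\int_0^T \mathbf{1}_N(X^{(2)}_t)\,\de t=0$ almost surely, and consequently $\int_0^t \mu_2(X^{(2)}_s)\,\de s=\int_0^t \mu_1(X^{(2)}_s)\,\de s$ for all $t$, almost surely. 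Thus $X^{(2)}$ is itself a strong solution of~\eqref{eq:langSDE} with the single selection $\mu_1$, and the pathwise uniqueness for a fixed selection established in Corollary~\ref{cor:SDI_existenceUniqueness} (via \cite[Theorem 3.21]{leobacher2017strong}) forces $X^{(2)}=X^{(1)}$ almost surely, which is the asserted almost-everywhere uniqueness.

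I expect the main obstacle to be the rigorous justification of the zero occupation time of $N$, i.e.\ the absolute continuity of the occupation measure of the solution. This is exactly where the nondegeneracy of the diffusion is indispensable, and where one must appeal to Gaussian density bounds or Krylov-type estimates that remain valid for a merely bounded measurable, non-Lipschitz drift; the boundedness of $F$ assumed throughout this section is what makes these tools applicable. By comparison, verifying that the set of multivaluedness of a tame Clarke subgradient is Lebesgue-null is routine once the stratification of Theorem~\ref{thm:FpwLip} is in hand.
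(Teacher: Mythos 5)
Your proposal is correct, and although it opens exactly as the paper's proof does, it diverges at the decisive step and in fact proves something slightly stronger. Both arguments begin with the same observation: since $f$ is tame and locally Lipschitz, $F=\partialc f$ is single-valued off a Lebesgue-null set, so any two piecewise Lipschitz selections $\mu_1,\mu_2$ coincide almost everywhere. The paper then compares the two solutions pathwise and directly: it applies Markov's inequality to $\|X^{(1)}_t-X^{(2)}_t\|$ up to the first hitting time $\bar t_1$ of the first connected component of $\Theta$, splits the drift difference by the triangle inequality, argues that $\int_0^t\|\mu_1(X^{(2)}_\tau)-\mu_2(X^{(2)}_\tau)\|\,\de\tau=0$ because the drifts agree a.e.\ and are bounded (``ruling out'' Dirac-type behaviour), argues that $\int_0^t\|\mu_1(X^{(1)}_\tau)-\mu_1(X^{(2)}_\tau)\|\,\de\tau=0$ because the paths agree on $[0,\bar t_1)$, and finally runs a finite induction over the connected components of $\Theta$. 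You instead show that the occupation measure of a solution charges no Lebesgue-null set --- via Girsanov or a Krylov estimate, which is where the constant nondegenerate diffusion and the boundedness of $F$ enter --- so that $X^{(2)}$ is automatically a strong solution of the SDE with the single drift $\mu_1$, and you then conclude by the pathwise uniqueness of \cite[Theorem 3.21]{leobacher2017strong}. Your route buys two things. First, the conclusion is indistinguishability ($X^{(1)}_t=X^{(2)}_t$ for all $t$, a.s.) rather than equality for a.e.\ $t$. Second, and more importantly, your occupation-time step is precisely the rigorous version of the paper's ``no Dirac delta'' remark: Lebesgue-a.e.\ equality of $\mu_1$ and $\mu_2$ controls nothing about their values along the random path unless the path's occupation measure is absolutely continuous, which is exactly what Girsanov/Krylov supplies. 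Similarly, where the paper's other vanishing-integral claim presupposes $X^{(1)}_\tau=X^{(2)}_\tau$ on $[0,\bar t_1)$ --- the very fact under proof --- you delegate that bootstrapping to the pathwise-uniqueness theorem for one fixed selection, where it has been carried out once and for all. The price of your approach is heavier probabilistic machinery; the paper's argument, by contrast, tries to remain elementary, using only Markov's inequality and the finite decomposition of $\Theta$, at the cost of the two informal steps just described.
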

\begin{proof}
Let $\mu_1,\mu_2:\mathbb{R}^d\to\mathbb{R}^d$ be two piecewise Lipschitz selections from $F(x),x\in\mathbb{R}^d$ with the same exceptional set $\Theta$. Since $f$ is a continuous tame function, then it is locally Lipschitz, and henceforth differentiable almost everywhere. Thus, there exists a zero Lebesgue-measure set $N\subseteq\mathbb{R}^n$ such that
\[
F(x)=\{\nabla f(x)\}\quad\forall x\in\mathbb{R}^n\setminus\left(N\cup\Theta\right).
\]
This implies that, for almost all $x\in\mathbb{R}^d$, $\mu_1(x)=\mu_2(x)$. Now, let $X_t^{(1)},X_t^{(2)}$ be the solutions with the same Brownian motion realization to
\[
\de X_t^{(i)}=\mu_i(X_t^{(i)})\de t+\sqrt{2\sigma}\de B_t,\quad X_0^{(i)}=X_0,\quad i=1,2.
\]
Now, let $\varepsilon>0$ be arbitrary and, noting that $X_0^{(1)}=X_0^{(2)}$, let $\overline{t}_1$ be the first time when $X_t^{(1)}, X_t^{(2)}$ hit the first connected component $\theta_1$ of $\Theta$. Then, for $t\in[0,\overline{t}_1]$, by Markov inequality we have
\begin{align*}
\mathbb{P}\left(\|X_t^{(1)}-X_t^{(2)}\|\geq\varepsilon\right) &\leq \frac{\mathbb{E}\left[\|X_t^{(1)}-X_t^{(2)}\|\right]}{\varepsilon} \\
&= \frac{\mathbb{E}\left[\int_0^t\|\mu_1(X_\tau^{(1)})-\mu_2(X_\tau^{(2)})\|\de\tau\right]}{\varepsilon} \\
&= \frac{\mathbb{E}\left[\int_0^t\|\mu_1(X_\tau^{(1)})-\mu_1(X_\tau^{(2)})+\mu_1(X_\tau^{(2)})-\mu_2(X_\tau^{(2)})\|\de\tau\right]}{\varepsilon}  \\
&\leq \frac{\mathbb{E}\left[\int_0^t\|\mu_1(X_\tau^{(1)})-\mu_1(X_\tau^{(2)})\|\de\tau\right]}{\varepsilon}+\frac{\mathbb{E}\left[\int_0^t\|\mu_1(X_\tau^{(2)})-\mu_2(X_\tau^{(2)})\|\de\tau\right]}{\varepsilon} \\
&= 0,
\end{align*}
being 
\[
\int_0^t\|\mu_1(X_\tau^{(1)})-\mu_1(X_\tau^{(2)})\|\de\tau=0
\]
since $X_\tau^{(1)}=X_\tau^{(2)}$ for all $\tau\in[0,\bar{t}_1)$, and
\[
\int_0^t\|\mu_1(X_\tau^{(2)})-\mu_2(X_\tau^{(2)})\|\de\tau=0
\]
since $\mu_1(x)=\mu_2(x)$ for almost every $x\in\mathbb{R}^d$ and they are bounded in $\mathbb{R}^d$, ruling out the possibility for any of the $\mu_i$'s, $i=1,2$, to behave as a Dirac delta in a neighborhood of $\bar{t}_1$. \\
Thus, since $\varepsilon>0$ has been arbitrarily chosen, it follows that $X_t^{(1)}=X_t^{(2)}$ for a.e. $t\in[0,\overline{t}_1]$. Now, proceeding by finite induction on the number of connected components of the exceptional set $\Theta$, with inductive hypothesis $X_{\overline{t}_i}^{(1)}=X_{\overline{t}_i}^{(2)}$, and applying analogous arguments as above, the claim is proved.
\end{proof}
}

\section{Fokker-Planck Equation and Free Energy Minimization}\label{sec:fpandfreenergy}

\subsection{Fokker-Planck Equation}

The Fokker-Planck (FP) equation describes the evolution of the probability density associated with the random process modeled by the diffusion flow. Classical results deriving the Fokker-Planck equation from SDEs can be reviewed in~\cite{eklund1971boundary,ito1953fundamental,kent1978,stroock2007multidimensional}. In particular, for a drift diffusion of the form $\de X_t=-\nabla f(X_t)\,\de t+\sqrt{2\sigma}\de B_t$ it holds that, from any initial distribution $\rho(0)=\rho_0$, the density $\rho(x,t)$ of $X_t$ is given by 
\begin{equation}\label{eq:fp}
    \frac{\partial \rho}{\partial t} = -\nabla\cdot\left(\nabla f(x)\rho\right)+\frac{1}{2}\Delta\left(\sigma \rho\right)
\end{equation} 
and has a limiting stationary distribution defined by the Gibbs form $\exp\left\{-f(x)/\sigma\right\}/Z$. In~\cite{jordanKinderlehrerOtto1998},  it was shown that the Fokker-Planck evolution corresponds to the gradient flow of a variational problem of minimizing a free-energy functional composed of the potential $f(x)$ and an entropy regularization. 

These classical results in order to even concern well-defined objects, require the smoothness of $f$. The FP equation can be derived from applying integration by parts after It{\^o}'s Lemma on the diffusion process. Following~\cite[4.3.5]{gardiner2009stochastic}, for arbitrary $g$, we have,
\begin{align*}
\frac{\langle g(x(t))\rangle}{\de t} &= \frac{\de}{\de t}\langle g(X(t))\rangle \\
&= \langle \nabla f(x) \partial_x f+\frac 12 \sigma \partial^2_{xx} g\rangle \\ 
&= \int\,\de x\left[\nabla f(x)\partial_x g+\frac 12 \sigma \partial^2_{xx} g\right]\rho(x,t\vert x_0,t_0) \\
&=
\int\,\de x g(x)\partial \rho(x,t\vert x_0,t_0)
\end{align*}
Replacing the expression $\nabla f(x)$ with an arbitrary coefficient function of form $a(x)$, we can see that in the case that $a(x)$ is a selection of the Clarke subdifferential it may not be a continuous function of $x$. In this case, even the weak sense derivative of $a(x)$ does not exist and integration by parts cannot be applied. 

\subsection{Generic Solution Existence Results}

Nevertheless, a solution $\rho(x,t)$ can be shown to exist for the system as stated in weak form. To this end, we follow \cite{BogachevKrylovRockner2001}. Let $\Omega_T\subseteq\mathbb{R}^{n}\times [0,T]$ be open, and
\[
L_{a,\sigma}\varphi=\sum_{i=1}^{n}a_{i}\parder{\varphi}{x_{i}}+\sum_{i,j=1}^{n}\sigma_{ij}\frac{\partial\varphi}{\partial x_{i}\partial x_{j}},\quad\varphi\in\mathscr{C}_{0}^{\infty}(\Omega_T).
\]
Moreover, let $n'$ be such that $\frac1n+\frac{1}{n'}=1$. The following holds:
\begin{theorem}[Corollary 3.2, \cite{BogachevKrylovRockner2001}]\label{th:generalexistfp}
Let $\mu$ be a locally finite Borel measure on $\Omega$ such that $a_i,\sigma_{ij}\in L_{loc}^{1}(\Omega_T,\mu)$ and, 
\[
\int_{\Omega_T}\left[\frac{\partial \phi}{\partial t}+\sum_{i=1}^{n} a_i \frac{\partial \phi}{\partial x} +\sum_{i,j=1}^{n}\sigma_{ij}\frac{\partial\varphi}{\partial x_{i}\partial x_{j}}\,\right]\de\mu=0
\]
for all nonnegative $\varphi\in\mathscr{C}_{0}^{\infty}(\Omega_T)$. Furthermore let $\sigma_{ij}$ be uniformly bounded, nondegenerate and H{\"{o}}lder continuous, then $\mu=\rho(x,t)\,\de x\,\de t$ with $\rho\in L_{loc}^{p}(\Omega_T)$ for every $p\in[1,(n+2)')$.
\end{theorem}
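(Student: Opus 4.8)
This is a local regularity statement for a nonnegative, locally finite measure $\mu$ that solves, in the distributional sense, the Fokker--Planck--Kolmogorov equation associated with the operator $L\varphi\udef\partial_t\varphi+\sum_{i}a_i\partial_{x_i}\varphi+\sum_{i,j}\sigma_{ij}\partial_{x_ix_j}\varphi$, that is $\int_{\Omega_T}L\varphi\,\de\mu=0$ for every $\varphi\in\mathscr{C}_0^\infty(\Omega_T)$ (the identity for nonnegative test functions extends to all of them by linearity). My plan is a duality argument. It suffices to prove that for every cylinder $Q\Subset\Omega_T$ there is a constant $C$ with $\left|\int_Q\varphi\,\de\mu\right|\le C\|\varphi\|_{L^{q}(Q)}$ for all $\varphi\in\mathscr{C}_0^\infty(Q)$, where $q$ is any fixed exponent with $q>n+2$. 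Once this holds, $\varphi\mapsto\int\varphi\,\de\mu$ extends to a bounded functional on $L^q(Q)$, so by the Riesz representation theorem the restriction of $\mu$ to $Q$ is $\rho\,\de x\,\de t$ with $\rho\in L^{q'}(Q)$; since $q'\to(n+2)'$ as $q\downarrow n+2$ and $q'<(n+2)'$ whenever $q>n+2$, letting $q$ range over $(n+2,\infty)$ yields $\rho\in L^{p}_{loc}$ for every $p\in[1,(n+2)')$, which is the claim.

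To build such a bound, fix $\varphi\in\mathscr{C}_0^\infty(Q)$ and solve the principal-part dual problem $\partial_t u+\sum_{i,j}\sigma_{ij}\partial_{x_ix_j}u=\varphi$ on a slightly larger cylinder $Q'\supset Q$ with zero terminal and lateral data. Here the hypotheses on $\sigma$ are essential: uniform boundedness together with nondegeneracy make the operator uniformly parabolic, while H{\"o}lder continuity places us in the scope of parabolic Schauder and Calder{\'o}n--Zygmund theory. Schauder estimates give a classical solution $u\in\mathscr{C}^{2,1}_{loc}(Q')$, so that cut-offs of $u$ are genuinely admissible test functions, and the $L^q$ theory gives $\|u\|_{W^{2,1}_q(Q')}\le C\|\varphi\|_{L^q}$; since $q>n+2$, the parabolic Sobolev embedding $W^{2,1}_q\hookrightarrow C^{1}$ yields the crucial quantitative bound $\|u\|_{L^\infty}+\|\nabla u\|_{L^\infty}\le C\|\varphi\|_{L^q}$.

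Next I would insert $\chi u$ into the weak identity, where $\chi\in\mathscr{C}_0^\infty(Q')$ equals $1$ on $\mathrm{supp}\,\varphi$; this is legitimate because $\chi u\in\mathscr{C}^{2,1}$ has compact support and can be approximated in $\mathscr{C}^{2,1}$ by smooth test functions, along which $\int L\psi\,\de\mu=0$ passes to the limit using only local finiteness of $\mu$ and $a_i,\sigma_{ij}\in L^1_{loc}(\mu)$. Expanding $L(\chi u)$ by the product rule reproduces $\chi\varphi$ from the principal part, produces the first-order term $\chi\,a\cdot\nabla u$, and leaves a remainder $R$ each of whose summands carries a derivative of $\chi$ and is thus supported in $\{\nabla\chi\neq0\}$, away from $\mathrm{supp}\,\varphi$. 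Since $\chi\equiv1$ on $\mathrm{supp}\,\varphi$, rearranging gives $\int\varphi\,\de\mu=-\int\chi\,a\cdot\nabla u\,\de\mu-\int R\,\de\mu$, and both right-hand terms are bounded by $C\|\varphi\|_{L^q}$: the first by $\|\nabla u\|_{L^\infty}\,\|a\|_{L^1(\mu)}$, the second by $\big(\|u\|_{L^\infty}+\|\nabla u\|_{L^\infty}\big)$ times the finite $\mu$-masses of $1,\,|a|,\,|\sigma|$ over $\mathrm{supp}\,\nabla\chi$.

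The step I expect to be decisive is the control of the drift term $\int a\cdot\nabla u\,\de\mu$. Because $a$ is assumed only $L^1(\mu)$-integrable, with no continuity or higher integrability, this term can be estimated solely through an $L^\infty$ bound on $\nabla u$; that bound is available only in the supercritical regime $q>n+2$, and it is precisely this threshold that fixes the exponent $(n+2)'$ in the conclusion. The remaining technical care lies in solving the dual problem with matching interior regularity and in the limiting justification of the weak identity for the non-smooth coefficients, both of which are exactly the minimal-condition subtleties established in \cite{BogachevKrylovRockner2001}, from which the precise statement is quoted.
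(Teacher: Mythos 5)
This statement is not proved in the paper at all: as the bracketed attribution indicates, it is quoted verbatim as Corollary 3.2 of the cited work of Bogachev, Krylov and R\"ockner, so there is no in-paper proof to compare your attempt against. Judged on its own terms, your sketch is a faithful reconstruction of the duality argument that underlies the cited result. The mechanism you describe is the right one: test the weak identity against a cutoff of the solution $u$ of the dual problem $\partial_t u+\sum_{i,j}\sigma_{ij}\partial_{x_ix_j}u=\varphi$, use uniform parabolicity plus H\"older continuity of $\sigma$ to obtain both a classical ($\mathscr{C}^{2,1}$) solution and the $W^{2,1}_q$ estimate, invoke the supercritical parabolic embedding ($q>n+2$ gives $\|u\|_{L^\infty}+\|\nabla u\|_{L^\infty}\le C\|\varphi\|_{L^q}$), and conclude by Riesz representation in $L^{q'}$ with $q'<(n+2)'$. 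You also correctly isolate the two places where the hypotheses enter irreducibly: the drift term can only be handled as $\|\nabla u\|_{L^\infty}\int|a|\,\de\mu$, since $a$ has no integrability beyond $L^1_{loc}(\mu)$ --- this is exactly what forces $q>n+2$ and hence the exponent $(n+2)'$ --- and the dual solution must be classical rather than merely $W^{2,1}_q$, because $\mu$ is a priori possibly singular with respect to Lebesgue measure, so $L(\chi u)$ must be defined pointwise $\mu$-a.e.; this is where H\"older continuity (Schauder theory) of $\sigma$, rather than a weaker condition such as VMO, is indispensable. The remaining gaps you flag (boundary compatibility for the dual problem, replacing global by interior $W^{2,1}_q$ estimates near the cutoff, and the limiting justification of the weak identity) are standard and do not affect the structure of the argument.
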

\begin{remark}
As observed in \cite{BogachevKrylovRockner2001}, one cannot expect that the density of $\mu$ is continuous even for infinitely differentiable $\sigma_{ij}$ under these conditions. However, we note that in~\cite{portenko1990generalized}
a continuous solution is shown under the assumption that $a_i\in L^{p}(\Omega_T,\mu)$, i.e. it is globally integrable. Again, however, this is very restrictive in the case of studying the evolution of diffusion operators on tame nonsmooth potentials of interest. 
\end{remark}
Existence of an invariant measure $\mu$ for the probability flow, i.e., a solution for the purely elliptic part, is guaranteed by \cite[Theorem 1.6]{BogachevRockner2001} and \cite[Theorem 1.2]{AlbeverioBogachevRockner1998}.

The strong regularity conditions, and the bounded open set $\Omega_T$, however, clearly limit both the applicability and informativeness of these results for our SDI.

\subsection{Fokker-Planck With Boundary Conditions}
Note, however, that we have an expectation as to what the stationary distribution for~\eqref{eq:langinc}, namely, of Gibbs form proportional to $\exp\{-f(x)/\sigma\}$. This measure is even absolutely continuous, and thus has higher regularity than Theorem~\ref{th:generalexistfp} suggests. 

To this end, consider~\cite[Chapter 5.1.1]{gardiner2009stochastic} which considers boundary conditions at a discontinuity for the FP equation. We can consider that the space is partitioned into connected components $R_i$, and in each region, the continuous SDE and associated FP~\eqref{eq:fp} holds. However, for boundaries between regions $S_{ij}\subseteq \Theta$ we have,
\begin{equation}\label{eq:boundaryfp}
n\cdot J_i(x,t)\big\vert_{S_{ij}^i} = n\cdot J_j(x,t)\big\vert_{S_{ij}^j},\,\,
\rho(x,t)\big\vert_{S_{ij}^i} = \rho(x,t)\big\vert_{S_{ij}^j}
\end{equation}
where the probability current $J$ is defined as,
\[
J_i(x,t) = F_i(x)\rho(x,t)+\frac 12 \sigma \frac{\partial\rho(x,t)}{\partial x_i}
\]
as defined on region $R_i$

Note that in one dimension this is simple: we have, at a point of non-differentiability $x_c$,
\begin{align*}
\rho(x,t)\big\vert_{x_c^+} &= \rho(x,t)\big\vert_{x_c^-},\\
\left.\left(F(x)\rho(x,t)+\frac 12 \sigma \frac{\partial\rho(x,t)}{\partial x_i}\right)\right\vert_{x_c^+} &= -\left.\left(F(x)\rho(x,t)+\frac 12 \sigma \frac{\partial\rho(x,t)}{\partial x_i}\right)\right\vert_{x_c^-}
\end{align*}
where we abuse notation to indicate the partial directional derivative of $\rho$ from either side of $x_c$. 

We can consider writing a stationary solution $\pi(x)$ to this system as we have a suspected ansatz, similarly as done in~\cite[Section 5.3]{gardiner2009stochastic}. Generically stationary implies $\nabla \cdot J(x)=0$, or that $J$ is divergence-free with respect to $x$. We have,
\begin{equation}\label{eq:solutionfp}
\sum_{i=1}^n \frac{\partial}{\partial x_i}  \left[F_i(x)\rho(x,t)+\frac 12 \sigma \frac{\partial\rho(x,t)}{\partial x_i}\right] = 0
\end{equation}

Indeed, let
\[
\pi(x) = \exp\left\{-f(x)/\sigma\right\}/Z.
\]
On the domain $\mathbb{R}^n\setminus\Theta$ where $\nabla f(x)$ is well-defined, this can
be seen immediately to solve~\eqref{eq:solutionfp}. Since $\Theta$ is of measure zero with respect to the ambient space, we can define,
\[
Z = \int\limits_{\mathbb{R}^n} \pi(x) \,\de x = \int\limits_{\cup_i R_i} \pi(x) \,\de x
\]

Of course, a constructive ansatz for the stationary solution, the elliptic form, neither shows its uniqueness, nor the existence, uniqueness and regularity of the parabolic evolution of the probability mass flow $\rho(x,t)$. To the best of our knowledge, no such result exists for the network of parabolic first-order systems under consideration, and at the same time, we shall see that the continuity of $\rho(x,t)$ is important in the next section for the variational conception of the FP equation as a minimizing flow for the free energy.

To this end, we can consider two lines of work as a foundation to formulate the requisite results. In particular, \cite{nittka2011regularity} shows regularity conditions of solutions to second-order parabolic equations defined on a Lipschitz domain. As seen in Section~\ref{sec:exceptionalset}, for the problems of interest, the exceptional set can be parameterized in a smooth way, which implies that for compact sets, the boundary is Lipschitz. We must, however, take care to translate the results appropriately to the potentially unbounded domains a potential $R_i$ could correspond to. 

The closest to our line of work is considering graph-structured networks of PDEs, such as modeling Kirchhoff's laws. A prominent and representative work along these lines is~\cite{von1988classical}. In this setting, there is a network of one-dimensional paths embedded in some ambient space in $\mathbb{R}^n$, with the paths connected at a series of vertices, with boundary conditions connecting a collection of linear parabolic PDEs governing the flow of a quantity across the network. Thus, the spirit of a network structure of PDEs with connecting boundary connections is analogous to our problem, however, with the caveat that they are one-dimensional embedded domains, even if embedded in a larger space. Existence and smoothness regularity conditions are shown.

We consider an approach using domain decomposition methods for the solutions of PDEs based on optimal control theory~\citep{gunzburger1999optimization}. 
See also, e.g.,~\cite{dolean2015introduction}. We consider reformulating the boundary conditions as a control to establish a variational formulation. 

First let $\delta>0$ be a regularization parameter and consider the optimization problem
\begin{equation}\label{eq:optcontroldelta}
\begin{split}
\min\limits_{\{\rho_i\in H^1(R_i)\}\{g_{ij}\in L^2(S_{ij})\}} &
\mathcal{J}_\delta(\{\rho_i\},\{g_{ij}\}) 
\\
\text{subject to } & \frac{\partial \rho_i}{\partial t} + F(x)\cdot \nabla \rho_i+\sigma \nabla\cdot\nabla \rho_i = 0,\,\text{ on }R_i,\,\forall i,\\
& n\cdot J_i = g_{ij},\,\text{ on }S_{ij},\\
& n\cdot J_j = -g_{ij},\,\text{ on }S_{ij},
\end{split}
\end{equation}
where
\[
\mathcal{J}_\delta(\{\rho_i\},\{g_{ij}\}) := \sum\limits_{ij}\left[\int_{S_{ij}}(\rho_i-\rho_j)^2\de S_{ij} +\frac{\delta}{2}\int_{S_{ij}} g_{ij}^2 \,\de S_{ij}\right].
\]
Let us also note the weak form of the PDE constraints,
\begin{equation}\label{eq:weakpdeoptcont}
\begin{split}
\int_{R_i} \left[\partial_t \rho_i v+\sigma\nabla \rho_i\cdot \nabla v +F\cdot \nabla \rho v\right] \,\de x  = \\
\sum\limits_j\int_{S_{ij}} g_{ij} \de S_{ij} -\sum\limits_j\int_{S_{ji}} g_{ji} \de S_{ji} ,\,
\forall v\in H^1(R_i).
\end{split}
\end{equation}

We have the following, akin to~\cite[Theorem 2.1]{gunzburger1999optimization}. Let
\[
\mathcal{U} = \left\{\{\rho_i\in H^1(R_i)\}\{g_{ij}\in L^2(S_{ij})\} \text{ satisfying~\eqref{eq:weakpdeoptcont}},\, \mathcal{J}_{\delta}(\{\rho_i\},\{g_{ij}\})<\infty\right\}
\]
\begin{theorem}\label{th:existoptcondd}
There exists a unique solution $(\{\rho_i\in H^1(R_i)\}\{g_{ij}\in L^2(S_{ij})\})$ to~\eqref{eq:optcontroldelta} in $\mathcal{U}$.
\end{theorem}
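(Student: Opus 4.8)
The plan is to prove this via the direct method of the calculus of variations, mirroring the argument of \cite[Theorem 2.1]{gunzburger1999optimization}. First I would fix the ambient Hilbert space $\mathcal{H} \udef \left(\prod_{i} H^1(R_i)\right) \times \left(\prod_{ij} L^2(S_{ij})\right)$, equipped with the natural product inner product, and observe that the admissible set $\mathcal{U}$ is precisely the set of pairs $(\{\rho_i\},\{g_{ij}\})$ satisfying the \emph{linear} weak-form constraint \eqref{eq:weakpdeoptcont} together with the finiteness of $\mathcal{J}_\delta$. Because \eqref{eq:weakpdeoptcont} is affine in $(\{\rho_i\},\{g_{ij}\})$ and $\mathcal{J}_\delta$ is a nonnegative continuous quadratic form, $\mathcal{U}$ is a convex subset of $\mathcal{H}$ cut out by a weakly closed affine subspace, which is the structural backbone of the whole argument.

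Second, I would establish that $\mathcal{U}$ is nonempty and that the state is controlled by the boundary data. Given any control $\{g_{ij}\}\in \prod_{ij}L^2(S_{ij})$, the forward problem \eqref{eq:weakpdeoptcont} is a parabolic system with flux-coupling interface conditions on the regions $R_i$; invoking a Lions/Lax--Milgram-type well-posedness together with the boundary regularity of \cite{nittka2011regularity}, it admits a unique state $\{\rho_i\}$ obeying an a priori energy estimate of the form $\sum_i \|\rho_i\|_{H^1(R_i)} \lesssim \sum_{ij}\|g_{ij}\|_{L^2(S_{ij})} + \text{(data)}$. This simultaneously shows $\mathcal{U}\neq\emptyset$ and that the control-to-state map $\{g_{ij}\}\mapsto\{\rho_i\}$ is affine and continuous.

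Third, existence would follow from coercivity plus weak lower semicontinuity. The regularization term $\frac{\delta}{2}\sum_{ij}\int_{S_{ij}}g_{ij}^2\,\de S_{ij}$ renders $\mathcal{J}_\delta$ coercive in the controls, and the energy estimate then bounds a minimizing sequence in all of $\mathcal{H}$; by reflexivity I extract a weakly convergent subsequence whose limit remains in $\mathcal{U}$ (weak closedness from the previous step), and since $\mathcal{J}_\delta$ is convex and continuous it is weakly lower semicontinuous, so the limit attains the infimum. For uniqueness I would use that, by the affineness of the control-to-state map, each difference $\rho_i-\rho_j$ is affine in $\{g_{ij}\}$, so that $\mathcal{J}_\delta$ becomes \emph{strictly} convex in the controls owing to the strictly convex $\delta$-penalty; hence the optimal control is unique, and feeding it through the uniquely solvable forward problem yields a unique optimal state, establishing uniqueness of the full pair.

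The main obstacle is the second step, namely the well-posedness and a priori estimate for the coupled forward problem. The difficulty is twofold: the interface conditions on $S_{ij}$ couple the subdomain equations in a nonstandard, flux-matching manner, and, as noted in the text preceding the theorem, a region $R_i$ may be unbounded, so the standard compact-domain energy and trace estimates do not apply verbatim. I would address this by localizing via the smooth, hence locally Lipschitz, structure of the exceptional set $\Theta$ established in Section~\ref{sec:exceptionalset}, applying the regularity theory of \cite{nittka2011regularity} on compact pieces, and controlling the behavior at infinity through the coercivity supplied by the diffusion term $\sigma\nabla\rho_i\cdot\nabla v$ in \eqref{eq:weakpdeoptcont}.
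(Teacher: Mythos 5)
Your proposal is correct in outline and shares the same skeleton as the paper's proof --- the direct method: a minimizing sequence, a uniform bound on the controls coming from the $\delta$-penalty, an a priori energy estimate transferring that bound to the states, weak compactness, stability of the linear constraint~\eqref{eq:weakpdeoptcont} under weak limits, and weak lower semicontinuity of $\mathcal{J}_\delta$ --- but it diverges from the paper at the key technical step and at uniqueness. For the energy estimate, the paper does not set up a Lions/Lax--Milgram well-posedness argument localized via \cite{nittka2011regularity}; instead it invokes the parabolic theory of \cite[Theorem IV.5.3]{ladyzhenskaya1967linejnye} to obtain unique solvability of the forward problems together with the anisotropic estimate~\eqref{eq:energyest}, patches the one hypothesis of that theorem which fails (integrability of the coefficients) by noting that with $\sigma$ constant and $F\in L^\infty(R_i)$ the relevant differential and boundary operators are still bounded, and then downgrades via Poincar\'e and Sobolev embedding to the $H^1$--$L^2$ bound~\eqref{eq:energyesttwo}. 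Your route through Nittka's Lipschitz-domain regularity plus a localization to handle unbounded $R_i$ is plausible, but it is precisely the step you flag as the main obstacle and leave unexecuted, so in that sense the paper's citation does more of the heavy lifting. Conversely, your uniqueness argument is tighter than the paper's: the paper concludes with ``since $\mathcal{J}_\delta$ is convex and $\mathcal{U}$ is linear, it is uniquely optimal,'' which as literally stated is incomplete, since a convex functional on an affine set may have many minimizers. Your observation that the affine, single-valued control-to-state map makes the reduced cost strictly convex in $\{g_{ij}\}$ --- thanks to the term $\frac{\delta}{2}\int_{S_{ij}} g_{ij}^2\,\de S_{ij}$ --- so that the optimal control is unique and then the state is pinned down by forward well-posedness, is exactly the supplement needed to make that final sentence rigorous.
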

\begin{proof}
Let $\{\rho_i^{(n)},g_{ij}^{(n)}\}$ be a minimizing sequence in $\mathcal{U}$, i.e.,
\[
\lim\limits_{n\to\infty} \mathcal{J}_{\delta}\left(\{\rho_i^{(n)},g_{ij}^{(n)}\}\right)=\inf\limits_{\{\rho_i,g_{ij}\}\in\mathcal{U}} \mathcal{J}_{\delta}\left(\{\rho_i,g_{ij}\}\right)
\]
By the definition of $\mathcal{U}$ we have that $g^{(n)}_{ij}$ are uniformly bounded in $L^2(S_{ij})$. 

Now, we argue that by~\cite[Theorem IV.5.3]{ladyzhenskaya1967linejnye} the PDEs given by~\eqref{eq:weakpdeoptcont} have unique solutions $\rho_i$ continuous with respect to the inputs, i.e.,
\begin{equation}\label{eq:energyest}
\|\rho_i\|_{H^{l+2,l/2+1}}\le C \sum_j\left(\|g_{ij}\|_{H^{l+1,(l+1)/2}(S_{ij})}+\|g_{ji}\|_{H^{l+1,(l+1)/2}(S_{ji})}\right)
\end{equation}
for some non-integral $l$, when the norms on the right-hand side are well defined.

When we consider the conditions for the application of this Theorem, we see that the only unsatisfied assumption is the integrability of the coefficients with respect to an appropriate dual Sobolev space. However, we can see from the proof of the result, in particular~\cite[Equation (IV.7.1)]{ladyzhenskaya1967linejnye}, that this is used only to show that the operators,
\[
\left[\partial_t +\sigma \nabla^2+F\cdot \nabla\right] \text{ and } \left[\sigma \nabla +F\cdot \right]
\]
are bounded. However, with $\sigma$ constant and $F\in L^\infty(R_i)$, this also clearly holds and these are operators from $H^{l+2,l/2+1}$ to $H^{l,l/2+1}$ on $R_i$
and $H^{l+1,l/2+1}$ on $S_{ij}$ respectively.

For any $l$, we can apply the Poincar{\'e} inequality on the left and Sobolev embedding on the right of~\eqref{eq:energyest} to obtain,
\begin{equation}\label{eq:energyesttwo}
\|\rho_i\|_{H^{1}(R_i)}\le C \sum_j\left(\|g_{ij}\|_{L^2(S_{ij})}+\|g_{ji}\|_{L^2(S_{ji})}\right)
\end{equation}

Thus, by the uniform boundedness of $g_{ij}^{(n)}$ from the definition of $\mathcal{U}$, we get the boundedness of $\rho_i^{(n)}$ and the existence of a convergent subsequence $\{\{\rho_i^{(n_k)}\},\{g_{ij}^{(n_k)}\}\}$ convergent to $(\{\hat\rho_i\},\hat g_{ij}\}$ with every $\hat\rho_i$ in $H^1(R_i)$ and $\hat g_{ij}$ in $L^2(S_{ij})$. Passing to the limit we see that they also satisfy~\eqref{eq:weakpdeoptcont} and by the lower semicontinuity of $\mathcal{J}_{\delta}$ we have that
\[
\inf\limits_{\{\rho_i\},\{g_{ij}\}} \mathcal{J}_{\delta}(\rho_i,g_{ij}) = \lim\inf\limits_k \mathcal{J}_{\delta} (\rho_i^{(n_k)},g_{ij}^{(n_k)})\ge \mathcal{J}_{\delta}(\{\hat \rho_i,g_{ij}\})
\]
and $(\{\hat\rho_i\},\hat g_{ij}\}$ is optimal. Since $\mathcal{J}_{\delta}$ is convex and $\mathcal{U}$ is linear, it is uniquely optimal.
\end{proof}

Now consider the following weak system of PDEs, now for a unique $\rho$,
\begin{equation}\label{eq:weakdd}
\begin{split}
\int_{R_i} \left[\partial_t \rho v+\sigma\nabla \rho\cdot \nabla v +F\cdot \nabla \rho v\right] \,\de x =\sum_j \int_{S_{ij}} (n\cdot (F \rho+\frac 12\sigma \nabla \rho))v\,\de S_{ij}
\\ \qquad\qquad\qquad -\sum_j \int_{S_{ji}} (n\cdot (F \rho+\frac 12\sigma \nabla \rho))v \,\de S_{ji},\,
\forall v\in H^1(R_i)
\end{split}
\end{equation}
Then, we prove the following.
\begin{theorem}\label{th:limitoptcondd}
For each $\delta>0$, denote $(\{\rho^\delta_i\}\{g^\delta_{ij}\})$
the solutions to~\eqref{eq:optcontroldelta} as given by Theorem~\ref{th:existoptcondd}. We have that for any convergent subsequence as $\delta\to 0$, it holds that there exists $\rho$ such that with $\rho_i=\rho\vert_{R_i}$ and $g_{ij} = n\cdot (F\rho+\frac 12\sigma \nabla \rho)\vert_{S_{ij}}$,
\[
\sum_i \|\rho^\delta_i-\rho_i\|_{H^1(R_i)}+\sum_{ij} \|g^\delta_{ij}-g_{ij}\|_{L^2(S_{ij})}\to 0
\]
and $\rho$ solves~\eqref{eq:weakdd}.
\end{theorem}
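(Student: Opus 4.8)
The plan is to treat the statement as a convergence result for the optimization-based domain-decomposition method and to follow the template of \cite{gunzburger1999optimization}: bound the penalized optimal values by $O(\delta)$ via a zero-jump admissible competitor, extract weak limits of $(\{\rho_i^\delta\},\{g_{ij}^\delta\})$, glue the pieces into a single $H^1$ function solving the coupled system \eqref{eq:weakdd}, and only then upgrade weak convergence to the asserted strong convergence using a norm-convergence argument together with the continuity estimate \eqref{eq:energyesttwo}.

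First I would exhibit the competitor. Since the drift is bounded, $F\in L^\infty$, and the diffusion is constant and uniformly elliptic, standard linear parabolic theory (the global counterpart of \cite[IV.5.3]{ladyzhenskaya1967linejnye}, suitably adapted to the possibly unbounded $R_i$) furnishes a weak solution $\rho^\star\in H^1$ of the single-domain problem; its restrictions $\rho_i^\star=\rho^\star|_{R_i}$ have matching traces across every interface and carry the induced conormal fluxes $g^\star_{ij}=n\cdot(F\rho^\star+\tfrac12\sigma\nabla\rho^\star)|_{S_{ij}}$. Feeding $(\{\rho_i^\star\},\{g^\star_{ij}\})$ into $\mathcal{J}_\delta$ annihilates the jump term $\int_{S_{ij}}(\rho_i-\rho_j)^2$, so that $\mathcal{J}_\delta(\{\rho_i^\delta\},\{g_{ij}^\delta\})\le\tfrac{\delta}{2}\sum_{ij}\|g^\star_{ij}\|_{L^2(S_{ij})}^2=O(\delta)$. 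Reading off the two terms of $\mathcal{J}_\delta$ gives simultaneously $\sum_{ij}\|\rho_i^\delta-\rho_j^\delta\|_{L^2(S_{ij})}^2\to0$ and the uniform control bound $\sum_{ij}\|g_{ij}^\delta\|_{L^2(S_{ij})}^2\le\sum_{ij}\|g^\star_{ij}\|_{L^2(S_{ij})}^2$; the energy estimate \eqref{eq:energyesttwo} then bounds $\{\rho_i^\delta\}$ uniformly in $H^1(R_i)$.

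Along the given subsequence I would pass to weak limits $g_{ij}^\delta\rightharpoonup\bar g_{ij}$ in $L^2(S_{ij})$ and $\rho_i^\delta\rightharpoonup\bar\rho_i$ in $H^1(R_i)$, whence the traces converge strongly in $L^2(S_{ij})$ by compactness of the trace map. Because the jump term vanishes in the limit, $\bar\rho_i=\bar\rho_j$ on each $S_{ij}$, and matching traces is exactly the condition for the pieces to glue into a single $\rho\in H^1(\bigcup_i R_i)$ with $\rho|_{R_i}=\bar\rho_i$. Passing to the limit in the linear weak constraint \eqref{eq:weakpdeoptcont} shows that $\rho$ satisfies the per-region equation with boundary data $\bar g_{ij}$; summing the two equations adjacent to each $S_{ij}$ and applying Green's identity recovers \eqref{eq:weakdd}, once one verifies that $\bar g_{ij}$ is the natural conormal flux $n\cdot(F\rho+\tfrac12\sigma\nabla\rho)|_{S_{ij}}$. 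Uniqueness of \eqref{eq:weakdd} then forces $\rho=\rho^\star$ and $\bar g_{ij}=g^\star_{ij}$, independently of the chosen subsequence.

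Strong convergence follows in two moves. Weak lower semicontinuity gives $\sum_{ij}\|\bar g_{ij}\|^2\le\liminf\sum_{ij}\|g_{ij}^\delta\|^2$, while the competitor bound yields $\limsup\sum_{ij}\|g_{ij}^\delta\|^2\le\sum_{ij}\|g^\star_{ij}\|^2=\sum_{ij}\|\bar g_{ij}\|^2$; the resulting norm convergence, combined with weak convergence in the Hilbert space $\prod_{ij}L^2(S_{ij})$, upgrades to strong convergence $g_{ij}^\delta\to g_{ij}$ in $L^2(S_{ij})$. Since the map $\{g_{ij}\}\mapsto\{\rho_i\}$ defined by \eqref{eq:weakpdeoptcont} is linear, $\rho_i^\delta-\rho_i$ solves the same weak problem with data $g_{ij}^\delta-g_{ij}$, so \eqref{eq:energyesttwo} gives $\|\rho_i^\delta-\rho_i\|_{H^1(R_i)}\le C\sum_j(\|g_{ij}^\delta-g_{ij}\|_{L^2}+\|g_{ji}^\delta-g_{ji}\|_{L^2})\to0$, which is the asserted convergence. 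I expect the main obstacle to be the flux identification in the third paragraph: because $g_{ij}^\delta$ converges only weakly and the conormal derivative of $\rho$ is a priori only a negative-order ($H^{-1/2}$) trace, pinning $\bar g_{ij}$ down as the genuine normal flux of $\rho$ requires a careful Green's-identity argument inside the weak formulation, using the interface continuity $\bar\rho_i=\bar\rho_j$; justifying the global well-posedness and uniqueness of \eqref{eq:weakdd} under the discontinuous drift $F$ is the other delicate point.
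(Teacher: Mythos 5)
Your proposal follows the same skeleton as the paper's proof: test optimality of $(\{\rho_i^\delta\},\{g_{ij}^\delta\})$ against a zero-jump competitor to get $\mathcal{J}_\delta=O(\delta)$, deduce vanishing interface jumps and uniform $L^2$ bounds on the fluxes, invoke the energy estimate \eqref{eq:energyesttwo} for uniform $H^1$ bounds, extract a convergent subsequence, pass to the limit in \eqref{eq:weakpdeoptcont}, and glue the matching-trace limits into a single $\rho$ solving \eqref{eq:weakdd}. Where you genuinely go beyond the paper is in three places, and each fills a real gap. First, the paper's proof uses the theorem-statement pair $(\{\rho_i\},\{g_{ij}\})$ itself as the competitor, which presupposes existence of the very limit object being constructed; you avoid this circularity by first producing the competitor $\rho^\star$ from global parabolic theory and only afterwards identifying it with the subsequential limit. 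Second, the paper stops at (weak) subsequential convergence plus the gluing step; it never establishes the strong convergence $\sum_i\|\rho_i^\delta-\rho_i\|_{H^1(R_i)}+\sum_{ij}\|g_{ij}^\delta-g_{ij}\|_{L^2(S_{ij})}\to 0$ that the statement actually asserts. Your Radon--Riesz upgrade (norm convergence of the fluxes from the competitor bound plus weak lower semicontinuity, then linearity of the constraint map and \eqref{eq:energyesttwo} to transfer strong convergence to the $\rho_i$'s) supplies exactly this missing step. Third, you identify the limiting flux $\bar g_{ij}$ as the conormal derivative $n\cdot(F\rho+\tfrac 12\sigma\nabla\rho)\vert_{S_{ij}}$, a point the paper passes over silently. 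Both arguments do share the same unproven ingredients---the applicability of the Ladyzhenskaya-type estimate on possibly unbounded $R_i$ and the uniqueness of solutions of \eqref{eq:weakdd} under the discontinuous drift $F$---but you at least flag these explicitly as the delicate points, whereas the paper asserts them without comment.
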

\begin{proof}
From the definition of $(\{\rho^\delta_i\}\{g^\delta_{ij}\})$ we have that 
\[
\mathcal{J}_{\delta}(\{\rho^\delta_i\}\{g^\delta_{ij}\})\le \mathcal{J}_{\delta}(\{\rho_i\}\{g_{ij}\}),
\]
which is,
\[
\sum\limits_{ij}\left[\int_{S_{ij}}(\rho^\delta_i-\rho^\delta_j)^2 d S_{ij} +\frac{\delta}{2}\int_{S_{ij}} (g_{ij}^\delta)^2 \,\de S_{ij}\right] \le \frac{\delta}{2}\int_{S_{ij}} g_{ij}^2 \,\de S_{ij},
\]
implying, by the uniform boundedness of $g_{ij}$ in $L^2(S_{ij})$ that $\|\rho^\delta_i-\rho^\delta_j\|_{L^2(S_{ij})}\to 0$ as $\delta\to 0$. Furthermore, from~\eqref{eq:energyesttwo} we get that $\|\rho_i^{\delta}\|_{H^1(R_i)}$ are uniformly bounded. Thus, with $\delta\to 0$ there is a subsequence converging to $\{\rho_{ij}^*,g_{ij}^*\}$ over $\{H^1(R_i),L^2(S_{ij})\}$ and passing to the limit implies they satisfy~\eqref{eq:weakpdeoptcont}. Furthermore $\|\rho^\delta_i-\rho^\delta_j\|_{H^1(S_{ij})}\to 0$ implies that $\rho_i^*\vert_{S_{ij}}=\rho_j\vert_{S_{ij}}$. Defining $\rho\in H^1(\mathbb{R}^n)$ by $\rho\vert_{R_i\cup(\cup_j \{S_{ij}\})}=\rho_i$ we obtain the unique solution to~\eqref{eq:weakdd}.
\end{proof}

We have now proven the existence of $\rho\in H^1(\mathbb{R}^n)$ satisfying the weak form of the PDE~\eqref{eq:fp} over $\cup_i R_i$, corresponding to where the coefficients are smooth almost everywhere, and with the boundary conditions~\eqref{eq:boundaryfp}.

\subsection{Variational Flow for the Free Energy}
\textcolor{black}{This section will extend the classical JKO study relating the Langevin equation to minimization of free energy to the SDI~\eqref{eq:langSDE}.}
In~\cite{jordanKinderlehrerOtto1998}, the FP equation~\eqref{eq:fp} is shown to be the gradient flow of the functional,
\begin{equation}\label{eq:var}
    \mathcal{F}(\rho) = E(\rho)+S(\rho;\sigma) = \int f(x) \rho \,\de x+\int \rho\log\rho \,\de x
\end{equation}
when $F(x)=\nabla f(x)$ and the stationary distribution is given by its minimizer. To this effect, one can consider a scheme,
\begin{equation}\label{eq:minschemefp}
    \rho^{(k)}:=\mathop{\arg\min}\limits_\rho \frac{1}{2} W(\rho^{(k-1)},\rho)^2+h \mathcal{F}(\rho)
\end{equation}
for some small $h$, where $W$ refers to the Wasserstein distance. 
\textcolor{black}{We shall see that with the arguments relying on properties of functions that are valid for weak sense solutions, we can extend the arguments readily to the case of set-valued drift, as the point-wise discontinuities do not affect the overall operation of the said functionals. As such, the stationary distribution is still $\sim e^{-f(x)}$ as expected, showing ergodicity to the expected distribution, opening up the possibility of algorithmic analysis.}

Let $M$ be defined as
\[
M\udef\left\{\rho:\mathbb{R}^n\to[0,\infty) \text{ measurable, and }\int \rho(x)\,\de x=1,\, \int\abs{x}^2 \rho(x)\,\de x<\infty\right\}
\]

We have the following Proposition, whose proof is unchanged in our setting,
\begin{proposition}\cite[Proposition 4.1]{jordanKinderlehrerOtto1998}
Given $\rho\in M$, there exists a unique solution to~\eqref{eq:minschemefp}.
\end{proposition}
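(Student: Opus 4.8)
The plan is to establish existence and uniqueness separately, exploiting the structure of the functional being minimized in \eqref{eq:minschemefp}. Writing $\Phi(\rho) \udef \tfrac{1}{2} W(\rho^{(k-1)},\rho)^2 + h\,\mathcal{F}(\rho)$, I would first argue that $\Phi$ is bounded below on $M$, so that the infimum $\alpha \udef \inf_{\rho \in M}\Phi(\rho)$ is finite. The Wasserstein term is nonnegative, the entropy term $\int \rho \log\rho\,\de x$ is controlled from below using the finite second-moment constraint built into $M$ (the standard trick: split $\rho\log\rho$ against a Gaussian reference measure so that the negative part is dominated by $\int |x|^2 \rho\,\de x < \infty$), and the energy term $\int f(x)\rho\,\de x$ is handled by whatever coercivity/lower bound on $f$ the setting provides. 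The key observation I would emphasize is that none of these three lower-bound estimates sees the point-wise discontinuities of $F = \partialc f$: they depend only on $f$ itself (which is locally Lipschitz and tame, hence measurable and bounded below on compacta) and on the metric/measure structure of $M$. This is precisely why ``the proof is unchanged in our setting,'' as the Proposition's statement asserts.

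Next I would take a minimizing sequence $\{\rho_m\} \subseteq M$ with $\Phi(\rho_m) \to \alpha$ and extract a limit. From the uniform bound $\Phi(\rho_m) \le \alpha + 1$ one gets, via the lower bounds above, a uniform bound on $\int |x|^2 \rho_m\,\de x$ (tightness, hence weak-$*$ precompactness in the space of probability measures, equivalently narrow compactness of a Wasserstein-bounded family) together with a uniform bound on the entropy, which upgrades weak convergence to the requisite mode and guarantees the limit $\rho^{(k)}$ is a genuine density in $M$. I would then pass to the limit using lower semicontinuity: the Wasserstein distance $\rho \mapsto W(\rho^{(k-1)},\rho)^2$ is lower semicontinuous with respect to narrow convergence, the entropy $S(\cdot;\sigma)$ is narrowly lower semicontinuous on sets with bounded second moment, and the energy $\int f\rho$ passes to the limit by the continuity of $f$ together with uniform integrability supplied by the second-moment bound. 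Combining these yields $\Phi(\rho^{(k)}) \le \liminf_m \Phi(\rho_m) = \alpha$, so $\rho^{(k)}$ attains the infimum.

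For uniqueness I would invoke strict convexity. The map $\rho \mapsto \int \rho\log\rho\,\de x$ is strictly convex along linear interpolations of densities, the energy $\int f\rho$ is linear, and while $W^2$ is only convex (not strictly) along linear interpolation, the strict convexity of the entropy term alone forces uniqueness of the minimizer: if two minimizers existed their midpoint would strictly decrease $\Phi$ unless they coincide almost everywhere. This is exactly the argument of \cite[Proposition 4.1]{jordanKinderlehrerOtto1998}, and again it is insensitive to the set-valued nature of $F$ because $F$ never appears in $\Phi$ — only its potential $f$ does, through the linear functional $E(\rho)$.

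I expect the main obstacle to be verifying the lower semicontinuity and lower-boundedness of $E(\rho) = \int f(x)\rho\,\de x$ without the smoothness of $f$ that the classical proof tacitly uses. The resolution is that $f$ is locally Lipschitz and of bounded variation (by the standing assumptions and Theorem~\ref{thm:FpwLip}), hence measurable, finite-valued, and bounded below by an affine function on the support controlled by the second-moment constraint; consequently $E$ is well-defined, lower semicontinuous under narrow convergence with uniform second moments, and the entire JKO existence/uniqueness argument goes through verbatim. The nonsmoothness of $f$, equivalently the discontinuity of selections of $F$, affects only the PDE/gradient-flow interpretation of the scheme — not the well-posedness of the variational step itself — which is why the cited Proposition can be imported without modification.
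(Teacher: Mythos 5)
Your proposal is correct and takes essentially the same approach as the paper: the paper simply imports \cite[Proposition 4.1]{jordanKinderlehrerOtto1998} with the remark that its proof is unchanged in this setting, and your reconstruction (direct method with moment/entropy lower bounds and lower semicontinuity for existence, strict convexity of the entropy plus convexity of $W^2$ and linearity of $E$ for uniqueness) is exactly that JKO argument. Your central observation---that the functional $\tfrac12 W(\rho^{(k-1)},\rho)^2 + h\,\mathcal{F}(\rho)$ depends only on the potential $f$ and never on the set-valued map $F$, so the nonsmoothness cannot affect the well-posedness of the variational step---is precisely the paper's justification for citing the result without modification.
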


Now we extend the classical main result, Theorem 5.1 in~\cite{jordanKinderlehrerOtto1998}, to our setting, which requires a few modifications to account for the nonsmooth potential in the free energy.
\begin{theorem}\label{th:fpconv}
Let $\rho^0\in M$ with $F(\rho^0)<\infty$ and $\rho^{(k)}_h$ the solution for~\eqref{eq:minschemefp} and
the interpolation $\rho_h:(0,\infty)\times\mathbb{R}^n\to [0,\infty)$ by,
\[
\rho_h(t)=\rho_h^{(k)}\text{ for }
t\in[kh,(k+1)h)\text{ and }k\in\mathbb{N}\cup\{0\}
\]
Then as $h\to 0$, $\rho_h(t)\to\rho(t)$ weakly in $L^1(\mathbb{R}^n)$ for all $t\in(0,\infty)$ where $\rho\in H^1((0,\infty)\times \mathbb{R}^n)$ \textcolor{black}{is a weak solution to}~\eqref{eq:solutionfp} with initial condition $\rho(t)\to\rho^0$ strongly in $L^1(\mathbb{R}^n)$ for $t\to 0$.
\end{theorem}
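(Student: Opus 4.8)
The plan is to run the minimizing-movements (JKO) argument of \cite{jordanKinderlehrerOtto1998} essentially line by line, isolating the single place where the nonsmoothness of $f$ enters and verifying that it is harmless because every iterate is absolutely continuous while the exceptional set is Lebesgue-null. First I would record the a priori estimates. Comparing the minimizer $\rho^{(k)}_h$ of \eqref{eq:minschemefp} against its predecessor, minimality gives
\[
\frac{1}{2}W(\rho^{(k-1)}_h,\rho^{(k)}_h)^2 + h\,\mathcal{F}(\rho^{(k)}_h) \le h\,\mathcal{F}(\rho^{(k-1)}_h),
\]
so $\mathcal{F}$ is nonincreasing along the scheme and, summing in $k$, $\sum_k W(\rho^{(k-1)}_h,\rho^{(k)}_h)^2 \le 2h\big(\mathcal{F}(\rho^0)-\inf\mathcal{F}\big)$. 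Combined with the uniform second-moment and entropy bounds obtained exactly as in \cite{jordanKinderlehrerOtto1998} (using that $f$ is bounded below on the relevant sets and the superlinearity of the entropy), this yields weak $L^1$-precompactness of $\{\rho_h(t)\}$ for each fixed $t$ and a uniform $\tfrac12$-H\"older-in-time estimate $W(\rho_h(s),\rho_h(t))^2 \le C(|t-s|+h)$. None of these steps differentiate $f$, so they transfer verbatim.

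Next I would derive the discrete Euler--Lagrange equation. For a smooth compactly supported vector field $\zeta$ with flow $\Phi_\tau$, perturb $\rho^{(k)}_h$ to $(\Phi_\tau)_\#\rho^{(k)}_h$ and differentiate the scheme objective at $\tau=0$; with $T^{(k)}$ the optimal map pushing $\rho^{(k)}_h$ onto $\rho^{(k-1)}_h$, this produces the discrete weak identity
\[
\frac{1}{h}\int \langle \mathrm{id}-T^{(k)},\zeta\rangle\,\de\rho^{(k)}_h = \int \big[\sigma\,\mathrm{div}\,\zeta - \langle\nabla f,\zeta\rangle\big]\,\de\rho^{(k)}_h .
\]
The one genuinely new point is the variation of the potential energy. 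Since finiteness of the entropy term forces $\rho^{(k)}_h \ll \mathrm{Leb}$, and $f$ is locally Lipschitz, Rademacher's theorem gives $\nabla f$ defined $\mathrm{Leb}$-a.e.\ and hence $\rho^{(k)}_h$-a.e.; the non-differentiability locus $\Theta$, being Lebesgue-null, carries no mass, and dominated convergence (via the local Lipschitz bound of $f$ on the compact support of $\zeta$) gives
\[
\frac{\de}{\de\tau}\Big|_{\tau=0}\int f\,\de\big((\Phi_\tau)_\#\rho^{(k)}_h\big) = \int \langle \nabla f,\zeta\rangle\,\de\rho^{(k)}_h .
\]
This is exactly where the Clarke/subgradient structure is respected without any smoothness of $f$. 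Summing over $k$, interpolating in time, and using the displacement bound of the first paragraph to control the transport term yields an approximate weak form of the Fokker--Planck dynamics~\eqref{eq:fp}.

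Finally I would pass to the limit. The compactness from the first paragraph lets me extract a subsequence with $\rho_h(t)\rightharpoonup\rho(t)$ weakly in $L^1$ for every $t$, with $\rho$ H\"older-in-time in $W$ and $\rho\in H^1((0,\infty)\times\mathbb{R}^n)$. I then pass to the limit term by term in the weak formulation. The transport term converges to $\partial_t\rho$ tested against $\varphi$ precisely as in \cite{jordanKinderlehrerOtto1998}, as that computation uses only the Wasserstein displacement estimate. The only term requiring care is
\[
\int \langle\nabla f,\nabla\varphi\rangle\,\rho_h\,\de x \longrightarrow \int \langle\nabla f,\nabla\varphi\rangle\,\rho\,\de x ,
\]
which holds because $\langle\nabla f,\nabla\varphi\rangle\in L^\infty$ (the test function has compact support and $\nabla f$ is locally bounded) and $\rho_h\rightharpoonup\rho$ weakly in $L^1$. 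This identifies $\rho$ as a weak solution, and the stated initial condition $\rho(t)\to\rho^0$ strongly in $L^1$ as $t\to0$ follows from the time-H\"older estimate.

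I expect the main obstacle to be exactly the two nonsmooth steps just flagged: justifying that the first variation of $\int f\,\de\rho$ equals $\int\langle\nabla f,\zeta\rangle\,\de\rho$ for absolutely continuous $\rho$, and that the limit passage in the corresponding term is valid. Both rest on the same two facts --- absolute continuity of the iterates (guaranteed by the finite-entropy constraint and Proposition~4.1 of \cite{jordanKinderlehrerOtto1998}) and Lebesgue-nullity of $\Theta$ --- while every remaining estimate is a transcription of the smooth JKO theory.
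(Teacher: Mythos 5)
Your skeleton (the JKO minimizing-movement scheme, push-forward perturbations along a flow, the a priori moment/entropy/Wasserstein estimates, weak $L^1$ compactness) matches the paper's proof, and your treatment of the single nonsmooth variation --- using finiteness of the entropy to get $\rho^{(k)}_h\ll\mathrm{Leb}$, Rademacher's theorem for $\nabla f$ a.e., and dominated convergence via the local Lipschitz bound on the support of the test field --- is a legitimate and arguably more self-contained substitute for what the paper does, namely invoking the conservative-field (path-differentiability) structure of $F$ to replace $\nabla f\cdot\xi$ by $\chi\cdot\xi$, where $\chi$ coincides a.e.\ with $\nabla f$. Up to the derivation of the limiting weak identity \eqref{eq:weakrho}, the two arguments are essentially the same.

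The genuine gap is at the end. You assert that the limit $\rho$ lies in $H^1((0,\infty)\times\mathbb{R}^n)$ as part of the compactness conclusion, but weak $L^1$ precompactness together with the Wasserstein time-H\"older estimate yields only an $L^1$ (at best $L^p_{loc}$) limit satisfying a weak formulation; no Sobolev regularity in $x$ or $t$ follows from those bounds. In the smooth JKO setting this final step is supplied by parabolic regularity theory for the limiting equation, which is exactly what is unavailable here because the drift is discontinuous across the exceptional set $\Theta$. The paper closes this hole with its domain-decomposition machinery: it shows that the JKO limit satisfies the interface-coupled weak system \eqref{eq:weakdd}, which encodes the flux and continuity conditions \eqref{eq:boundaryfp} across $\Theta$, and then invokes the uniqueness of the $H^1$ solution of that system (Theorem~\ref{th:limitoptcondd}, resting on Theorem~\ref{th:existoptcondd}) to identify the limit with it. That identification simultaneously delivers the claimed $H^1$ regularity and, since the limit point is unique, convergence of the whole family $\rho_h$ rather than of a subsequence. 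Your proposal never connects the limit to the interface behavior on $\Theta$, never establishes uniqueness of the limit (so you obtain only subsequential convergence, which is weaker than the statement), and provides no source for the $H^1$ regularity; these missing pieces are precisely what the paper's Theorems~\ref{th:existoptcondd} and~\ref{th:limitoptcondd} were constructed to supply.
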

\begin{proof}
Let $\xi\in C^\infty_0(\mathbb{R}^n,\mathbb{R}^n)$ be a smooth vector field with bounded support, and its flux $\Phi_\tau$ as $\partial_\tau \Phi_\tau = \xi \circ \Phi_\tau$ for all $\tau\in\mathbb{R}$ and $\Phi_0=\mathop{id}$. The measure $\rho_\tau(y)\,\de y$ is the push forward of $\rho^{(k)}(y)\,\de y$ under $\Phi_\tau$. This means that
\[
\int_{\mathbb{R}^n} \rho_\tau(y)\zeta(y) \,\de y = \int_{\mathbb{R}^n} \rho^{(k)}(y)\zeta(\Phi_\tau(y))\,\de y
\]
for all $\zeta\in C^0_0(\mathbb{R}^n)$, which implies that $\det \nabla \Phi_\tau \rho_t\circ \Phi_\tau = \rho^{(k)}$. By the properties of~\eqref{eq:minschemefp} we have that
\begin{equation}\label{eq:usingminfp}
\frac{1}{\tau}\left[\left(\frac 12 W(\rho^{(k-1)},\rho_\tau)^2+h \mathcal{F}(\rho_\tau)\right)-
\left(\frac 12 W(\rho^{(k-1)},\rho^{(k)})^2+h \mathcal{F}(\rho^{(k)})\right)\right] \ge 0.
\end{equation}
Now, consider $\zeta = f$ and so
\[
\int_{\mathbb{R}^n} \rho_\tau(y)f(y) \,\de y = \int_{\mathbb{R}^n} \rho^{(k)}(y)f(\Phi_\tau(y))\,\de y
\]
and also
\[
\frac{1}{\tau}\left(E(\rho_\tau)-E(\rho^{(k)})\right) = 
\int_{\mathbb{R}^n} \frac{1}{\tau} \left(f(\Phi_{\tau}(y))-f(y)\right)\rho^{(k)}(y)\,\de y
\]
Recalling the consideration of a conservative vector field, cf. Definition \ref{def:conservativeMapForF} above. In the original,
the term $\nabla f(y)\cdot \xi(y)$ appears, instead, we have a vector $\chi(y)$ that represents directional change in $f$.
Specifically, we can write that
\[
\frac{d}{d\tau}[E(\rho_\tau)]_{\tau=0} = \int_{\mathbb{R}^n} \chi(y)\cdot \xi(y)\rho^{(k)}(y)\,\de y.
\]
We can continue similarly as in the proof of~\cite[Theorem 5.1]{jordanKinderlehrerOtto1998} to obtain,
\[
\frac{d}{d\tau} [S(\rho_\tau)]_{\tau=0} = -\int_{\mathbb{R}^n} \rho^{(k)} \text{div} \xi\,\de y
\]
and subsequently the following a priori estimates, wherein for any $T<\infty$ there exists $C$ such that for all $N\in\mathbb{N}$ and $h\in[0,T]$ with $Nh\le T$ there holds,
\begin{equation}\label{eq:aprioribound}
    \begin{split}
    M(\rho_h^{(N)})\le C,\\
    \int_{\mathbb{R}^n}\max\{\rho_h^{(N)}\log \rho_h^{(N)},0\}\,\de x \le C,\\
    E(\rho^{(N)}_h)\le C,\\
    \sum\limits_{k=1}^N W(\rho_h^{(k-1)},\rho_h^{(k)})^2\le Ch
    \end{split}
\end{equation}
which implies the existence of a convergent subsequence,
\[
\rho_h \rightharpoonup \rho \text{ weakly in }L^1((0,T)\times\mathbb{R}^n)\text{ for all } T<\infty
\]
with $\rho(t)\in M$ and $E(\rho)\in L^\infty((0,T))$, and $\rho$ satisfying
\begin{equation}\label{eq:weakrho}
    -\int_{(0,\infty)\times\mathbb{R}^n} \rho (\partial_t \zeta-\chi \cdot \nabla \zeta+\triangle \zeta) \,\de x\,\de t = \int_{\mathbb{R}^n} \rho^0 \zeta(0)\,\de x \text{ for all }\zeta\in C^{\infty}_0(\mathbb{R}\times\mathbb{R}^n)
\end{equation}
We can proceed similarly as in the proof of Theorem 5.1 in~\cite{jordanKinderlehrerOtto1998} using appropriate test functions to conclude that $\rho\in L^p_{loc}((0,\infty),\mathbb{R}^n)$. Now, clearly this $\rho$ solves~\eqref{eq:weakdd}, which has a unique solution in $H^1(\mathbb{R}^n, (0,T))$ by Theorem~\ref{th:limitoptcondd}. 

The final statements of the Theorem follow as in Theorem 5.1 in~\cite{jordanKinderlehrerOtto1998}.
\end{proof}



\section{Numerical Illustration}\label{sec:num}
\subsection{One-Dimensional Example}

For the first illustration, we took a one-dimensional function
\begin{equation}\label{eq:example}
f(x):=\left\{ \begin{array}{lr}
-x-1 & x< -1\\
x+1 & -1\le x < 0 \\
1-x & 0\le x < 1 \\
x-1 & x\ge 1
\end{array}\right. ,\,\,\,
F(x):=\left\{ 
\begin{array}{lr}
-1 & x< -1 \\
\left[-1,1\right] & x=-1 \\
1 & -1<x< 0 \\
\left[-1,1\right] & x= 0 \\
-1 & 0<x< 1 \\
\left[-1,1\right] & x= 1 \\
1 & x>1
\end{array}\right.
\end{equation}
The probability density function associated with minimizing the free energy is shown in Figure~\ref{fig:examplepdf} and the result of one hundred thousand samples generated by the Metropolis Algorithm in Figure~\ref{fig:metro}.
\begin{center}
    \begin{figure}
        \centering
        \includegraphics[scale=0.5]{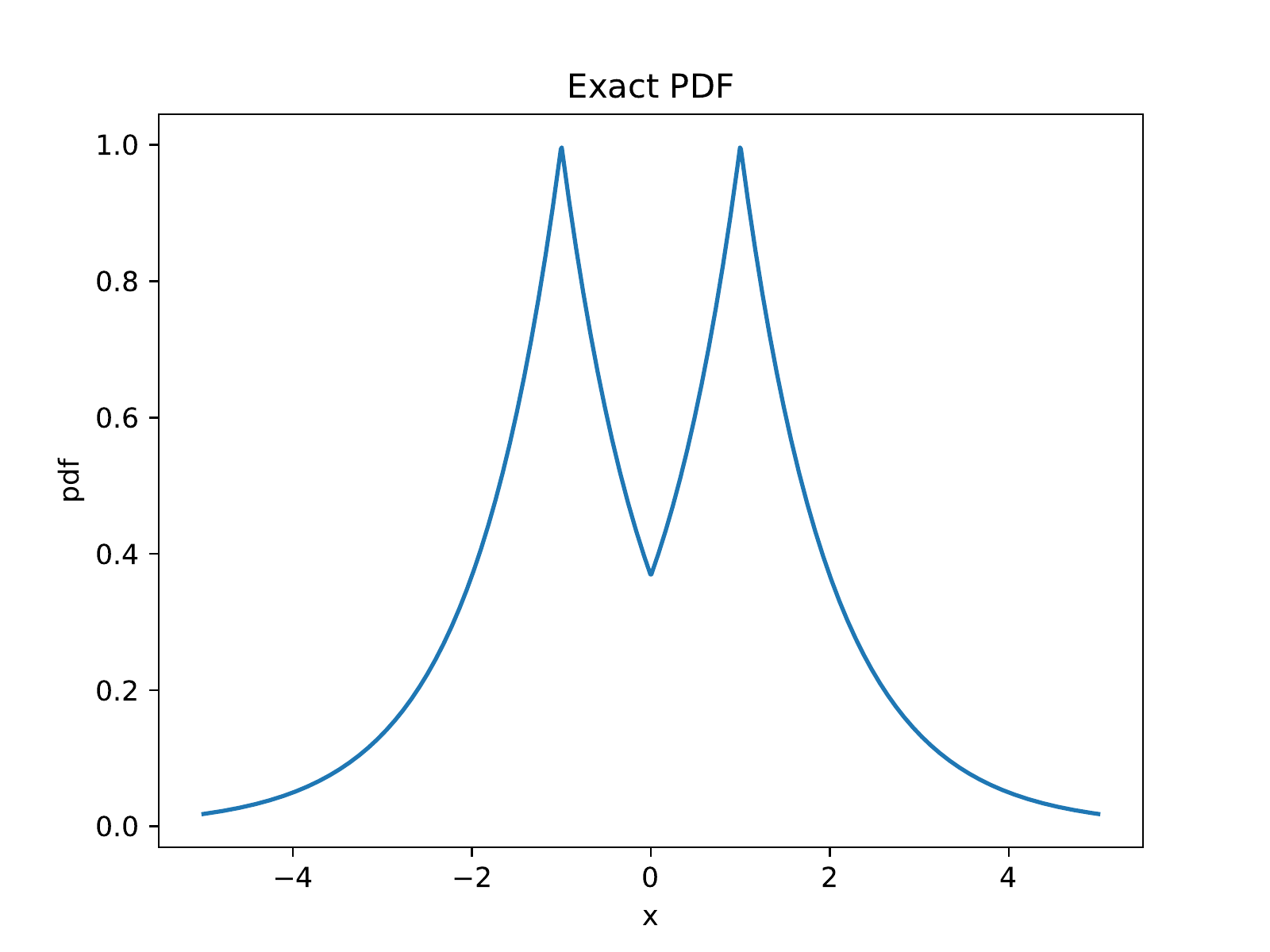}
        \caption{Stationary Distribution $e^{-f(x)}$ Associated with~\eqref{eq:example}.}
        \label{fig:examplepdf}
    \end{figure}
\end{center}

\begin{center}
    \begin{figure}
        \centering
        \includegraphics[scale=0.5]{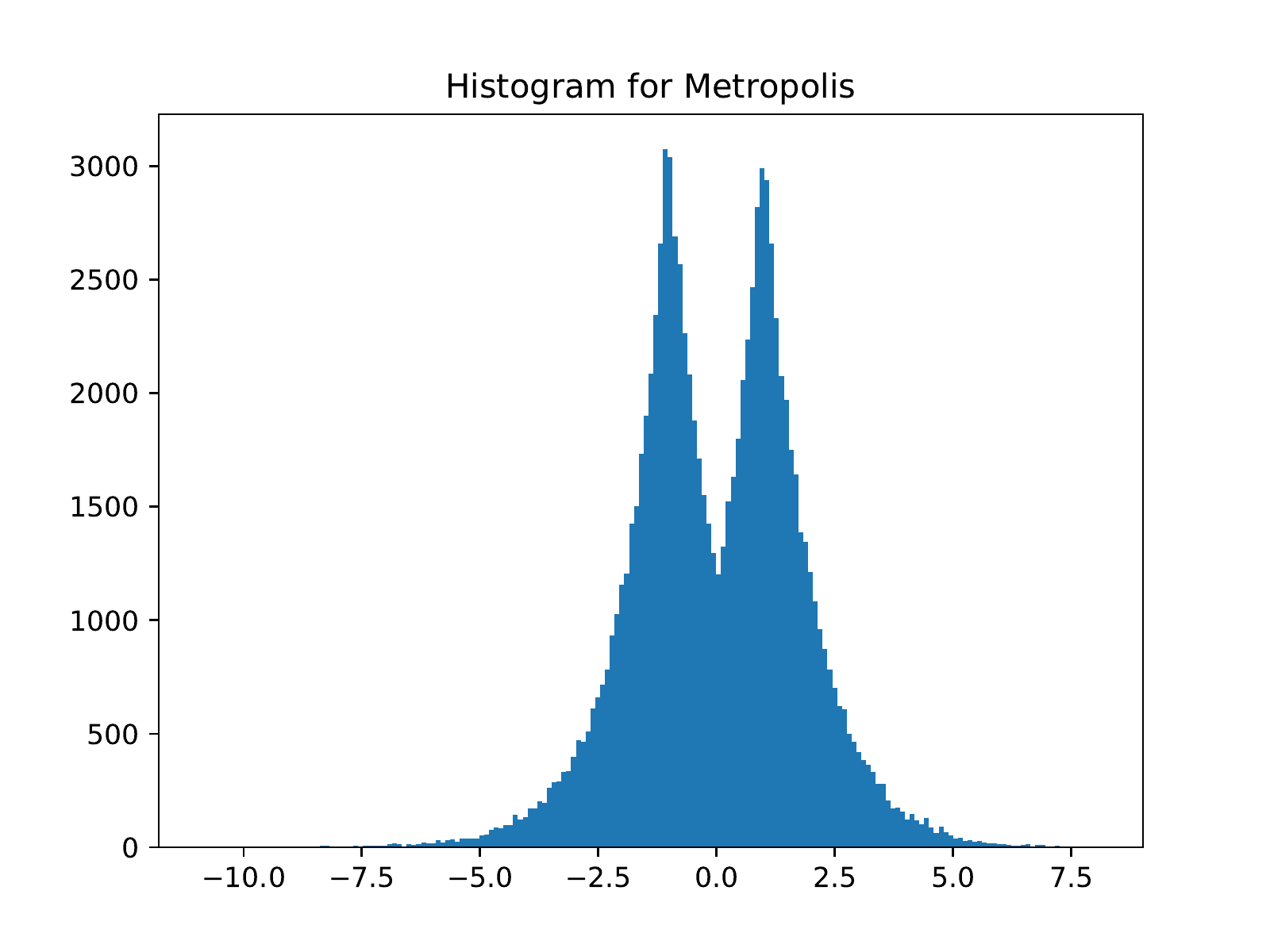}
        \caption{Histogram of 100k Metropolis Samples of~\eqref{eq:example}}
        \label{fig:metro}
    \end{figure}
\end{center}

As an illustration, we ran unadjusted Langevin dynamics with the Euler-Maruyama discretization, i.e., generated samples with the following iteration,
\begin{equation}\label{eq:iterationexample}
x_{k+1} = x_k - \epsilon g_k+\sqrt{2\epsilon } B_k,\, g\in F(x_k),\, B_k\sim N(0,1).
\end{equation}
We generated ten million samples with $\epsilon\in\{0.01,0.001,0.0001\}$. We plot the histograms of the final sample count in Figure~\ref{fig:examplehist} and plot the Wasserstein distance (computed with \texttt{scipy}) in Figure~\ref{fig:wasdist}.

\begin{center}
    \begin{figure}
        \centering
        \includegraphics[scale=0.26,trim={0 0 0 1.35cm },clip]{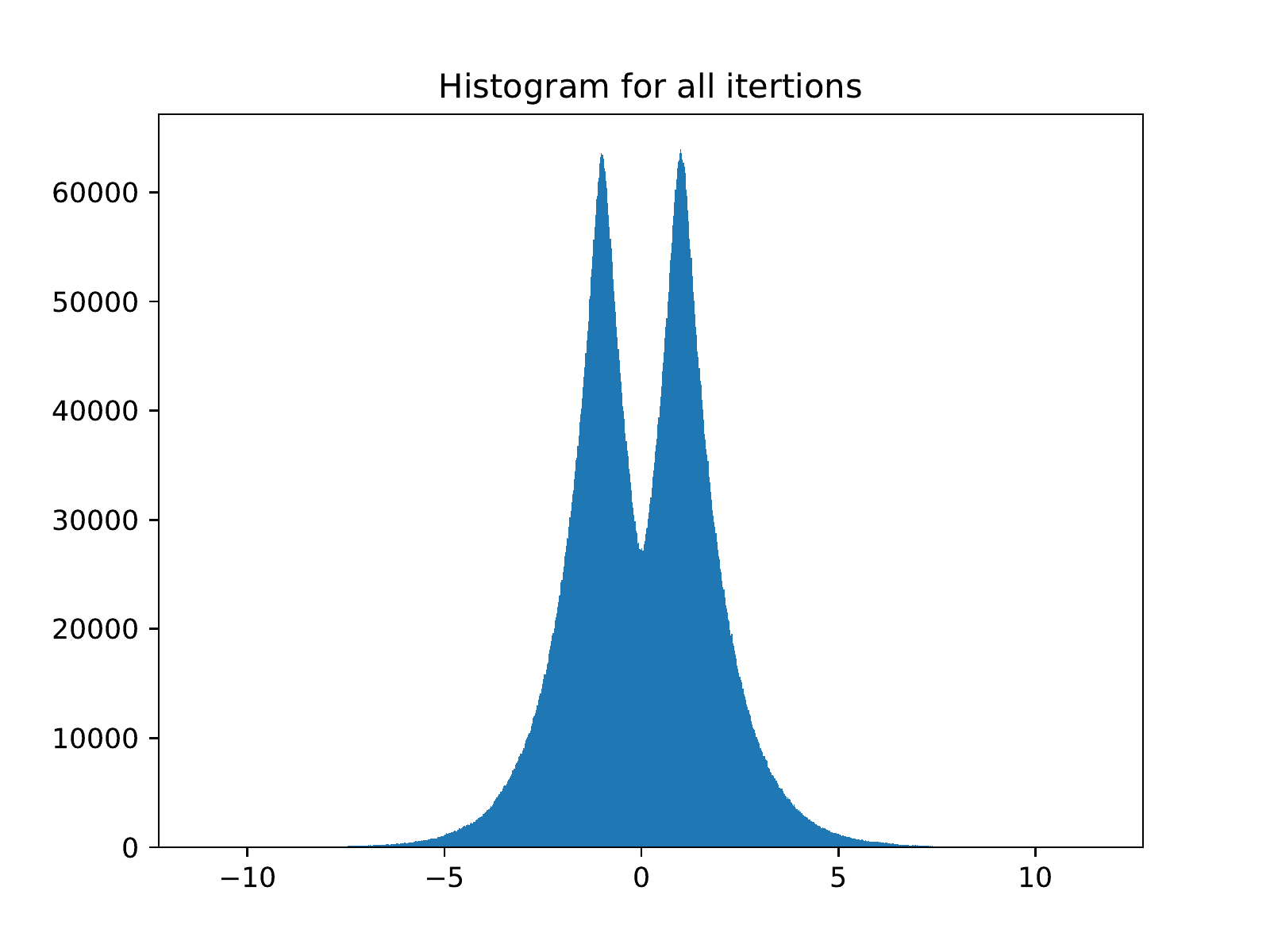}
        \includegraphics[scale=0.26,trim={0 0 0 1.35cm },clip]{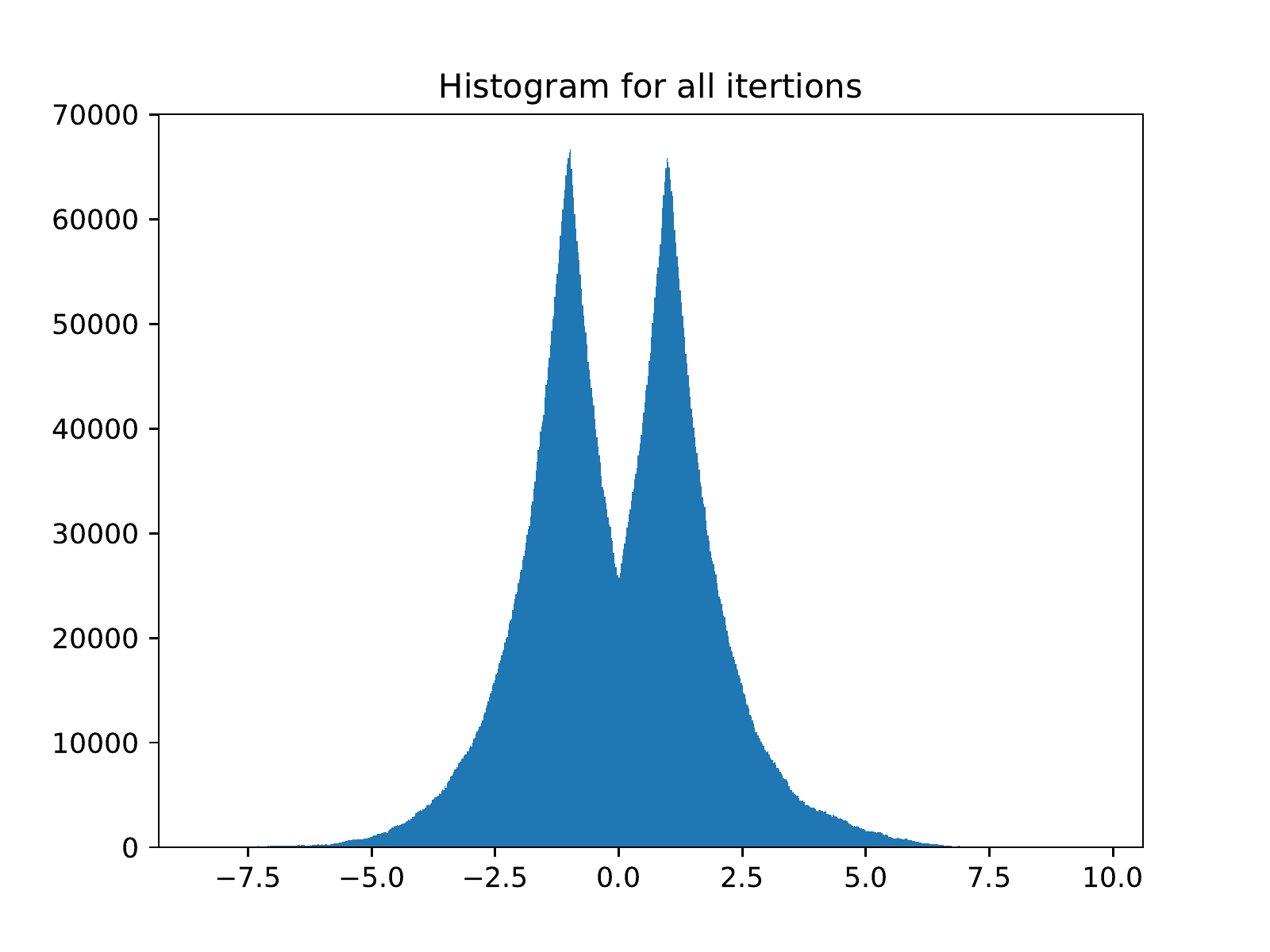}
        \includegraphics[scale=0.26,trim={0 0 0 1.35cm },clip]{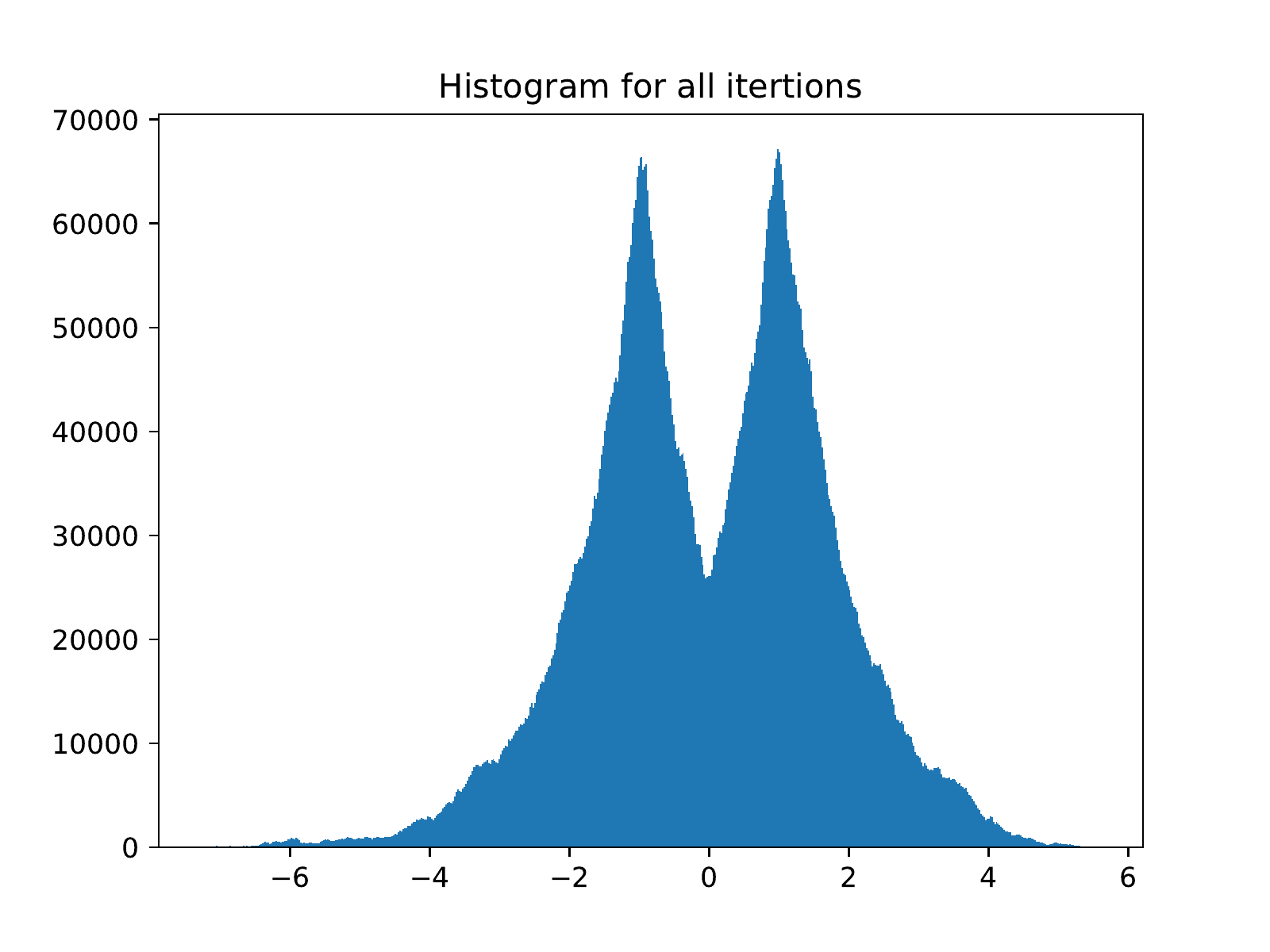}
        \caption{Histogram for~\eqref{eq:iterationexample} at $\epsilon\in\{0.01,0.001,0.0001\}$ in this order. 
        }
        \label{fig:examplehist}
    \end{figure}
\end{center}

\begin{center}
    \begin{figure}
        \centering
        \includegraphics[scale=0.26]{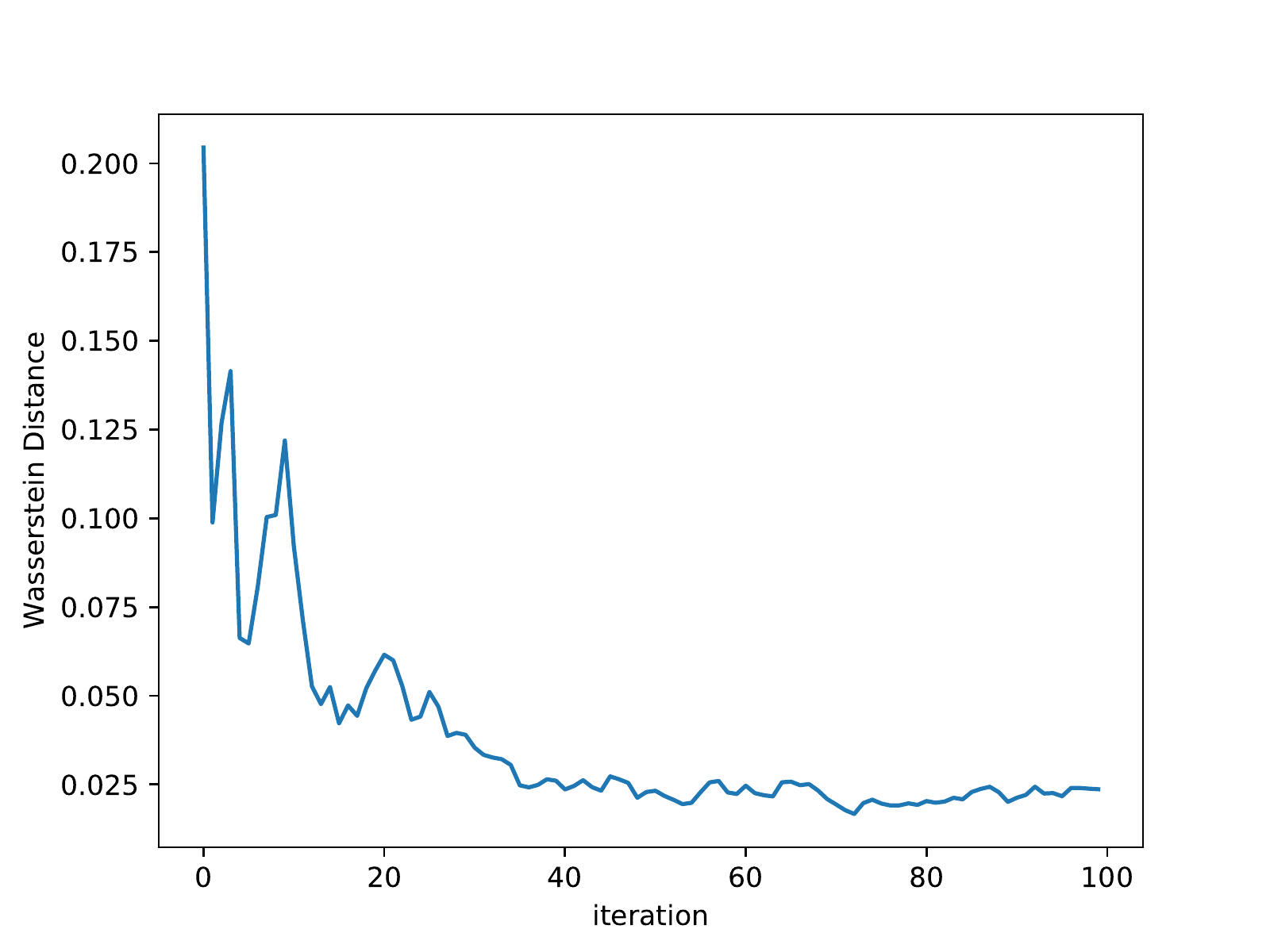}
        \includegraphics[scale=0.26]{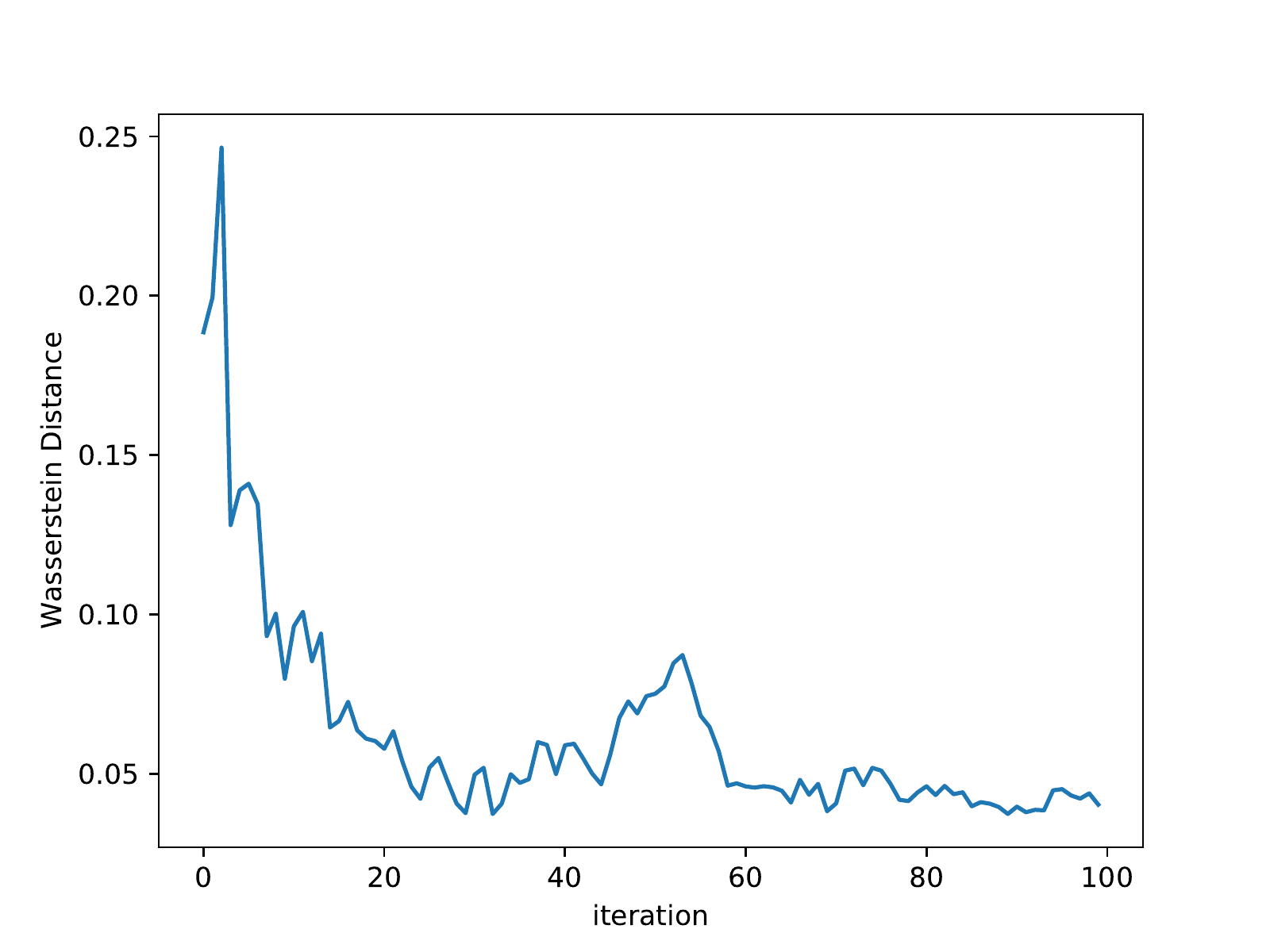}
        \includegraphics[scale=0.26]{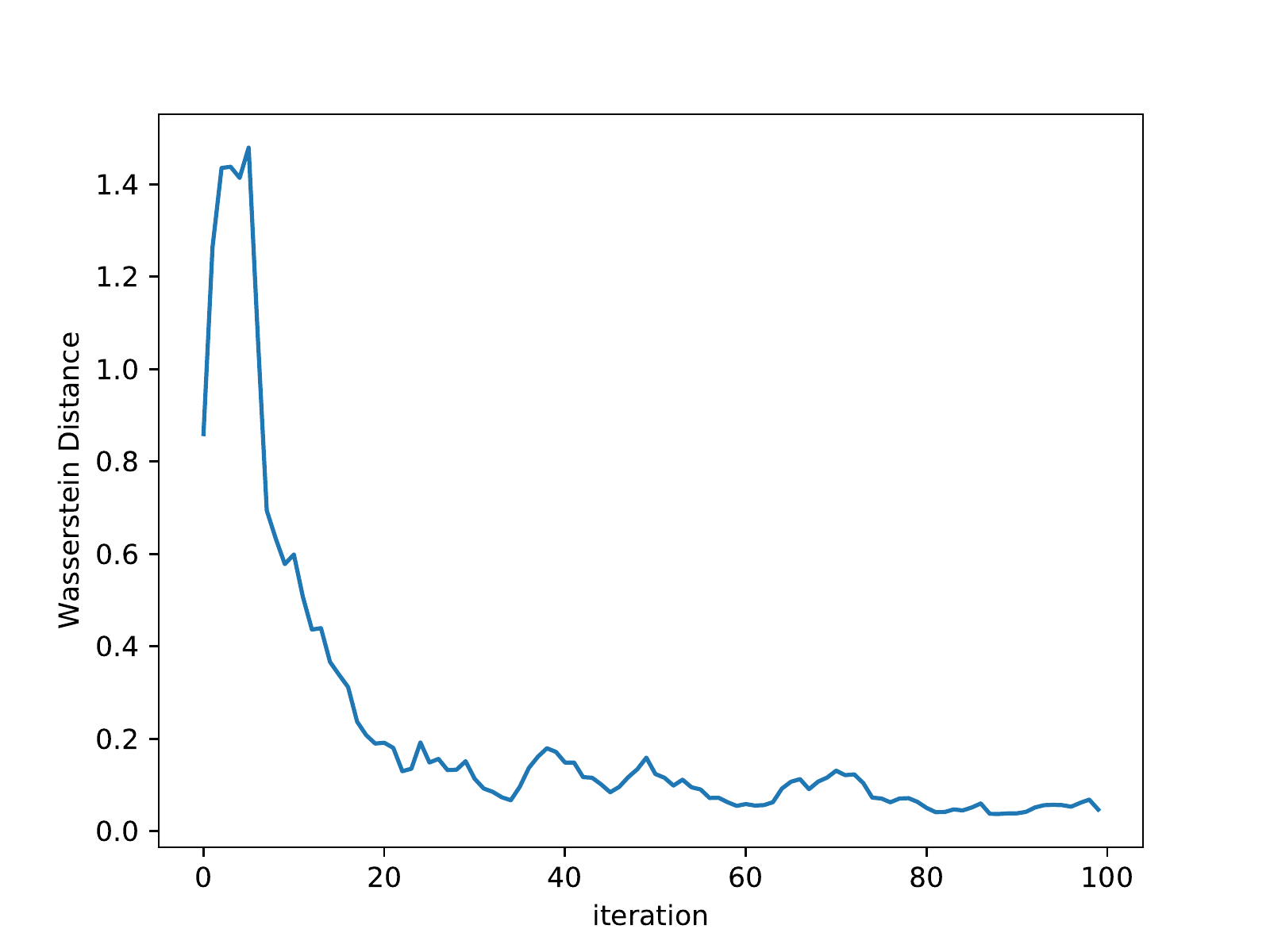}
        \caption{Wasserstein Distance of the Langevin-type iteration~\eqref{eq:iterationexample} and Metropolis-generated samples for $\epsilon\in\{0.01,0.001,0.0001\}$, in this order.}
        \label{fig:wasdist}
    \end{figure}
\end{center}
We can observe that indeed the iteration does recreate the posterior, i.e., it appears to be ergodic, although the quantity of samples required is fairly large, indeed it seems to converge in probability distance after about 3 million samples, suggesting geometric ergodicity is unlikely.

\subsection{Bayesian ReLU Neural Network}
The use of gradient-based samplers had been introduced due to their improved mixing rate with respect to dimensionality dependence. Although we do not derive quantitative mixing rates in this work, it would be naturally suspected that such behavior could carry over to the nonsmooth case. For this purpose, we perform Metropolis-Hastings, (subgradient) unadjusted Langevin, and Metropolis-corrected Langevin on a ReLU network. To avoid complications associated with inexactness, we used moderately sized datasets and performed backpropagation on the entire data sample, rather than a stochastic variant.

Specifically, we consider the \texttt{E2006} and \texttt{YearPredictionMSD} datasets from the UCI LIBSVM datset repository~\citep{chang2011libsvm}. Both datasets have high parameter dimension, 150000 and 90, respectively, and with 16k or 460k as the number of training samples. We used a ReLU network with three hidden layers, each with 10 neurons. In this case, with the high dimension, rather than attempting to visualize the posterior, we plot the averaged (across 20 runs) of the loss on the test set. See Figure~\ref{fig:reluresult} for the results of the Langevin and Metropolis-adjusted Langevin approach on the test accuracy for dataset \texttt{E2006}. In this case, Metropolis resulted in
a completely static mean high variance test loss across the samples, being entirely uninformative. Next, Figure~\ref{fig:reluresult2} shows the test loss
for a ReLU network using unadjusted Langevin. In this case, note
the noisy initial plateau is followed by decrease. For both
Metropolis as well as the Metropolis-corrected variant of
Langevin there is no improvement due to repeated step rejection of the initial sample or minimal change in the error. 
\begin{center}
    \begin{figure}
        \centering
        \includegraphics[scale=0.42]{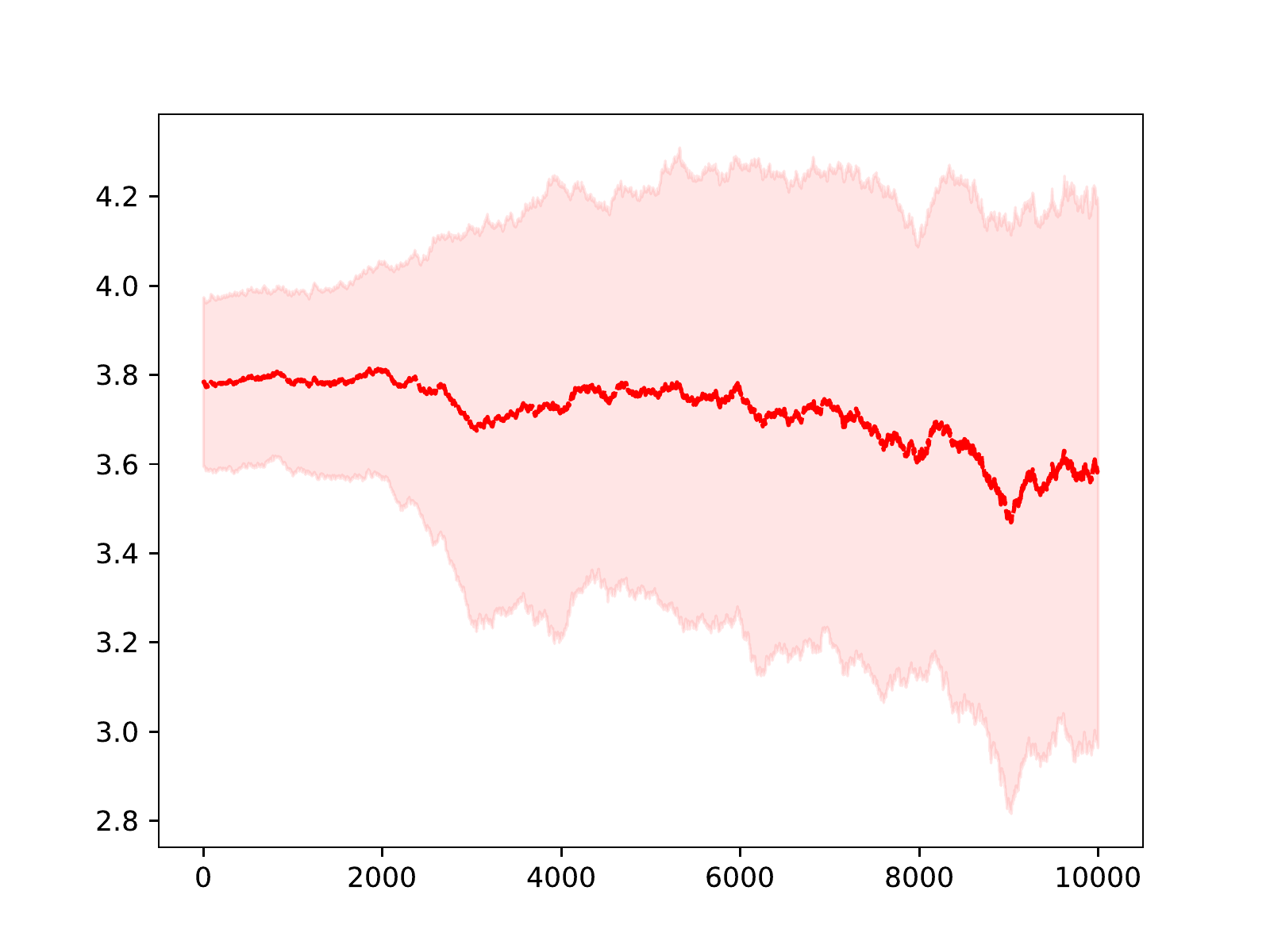}
        \includegraphics[scale=0.42]{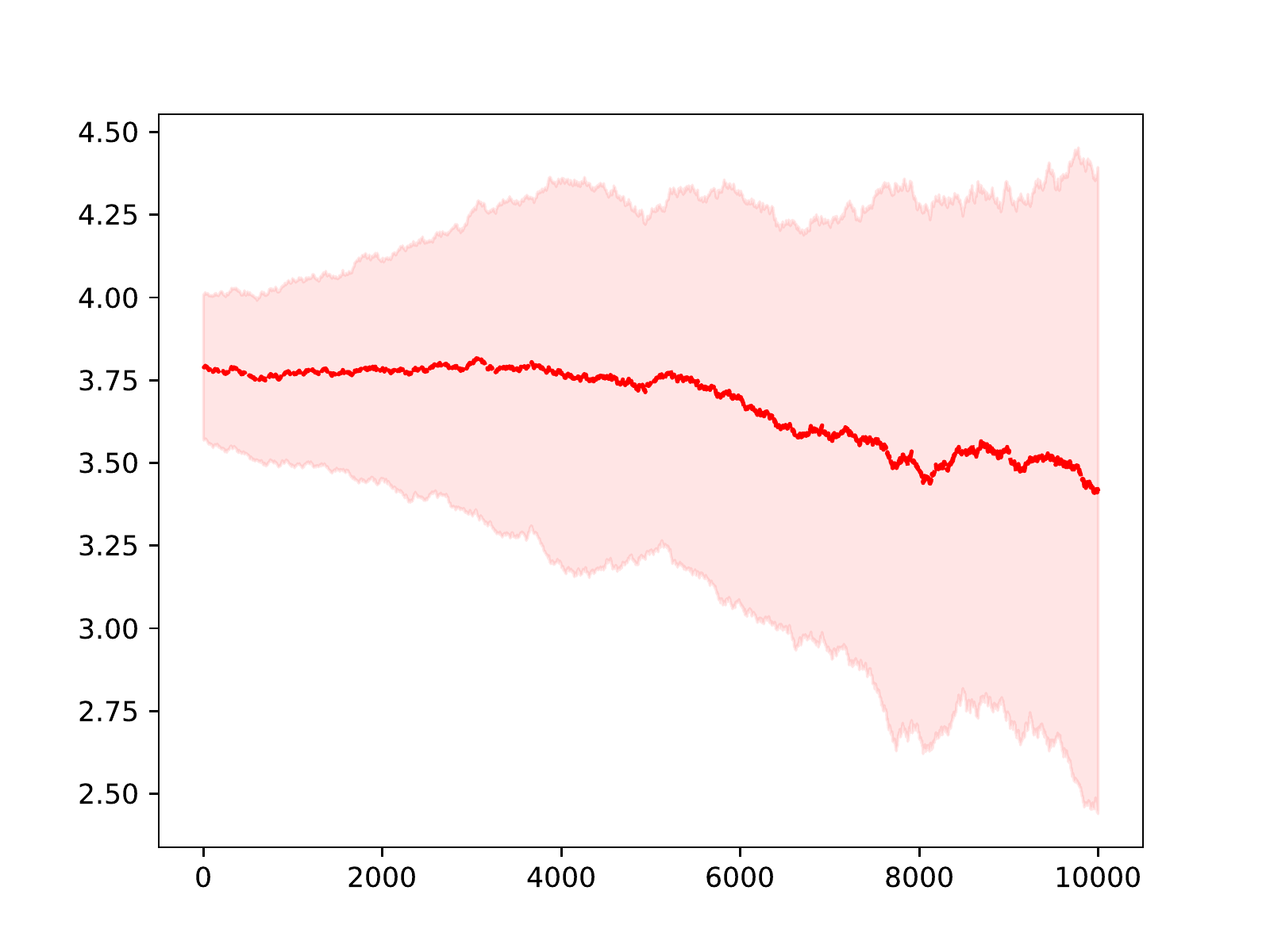}
        \caption{Test Loss on the sampled parameters generated by the unadjusted as well as the Metropolis-adjusted Langevin method on a ReLU Neural Nework on a regression task for the dataset \texttt{E2006}. The discretization rate is 1e-05.}
        \label{fig:reluresult}
    \end{figure}
\end{center}

\begin{center}
    \begin{figure}
        \centering
        \includegraphics[scale=0.8]{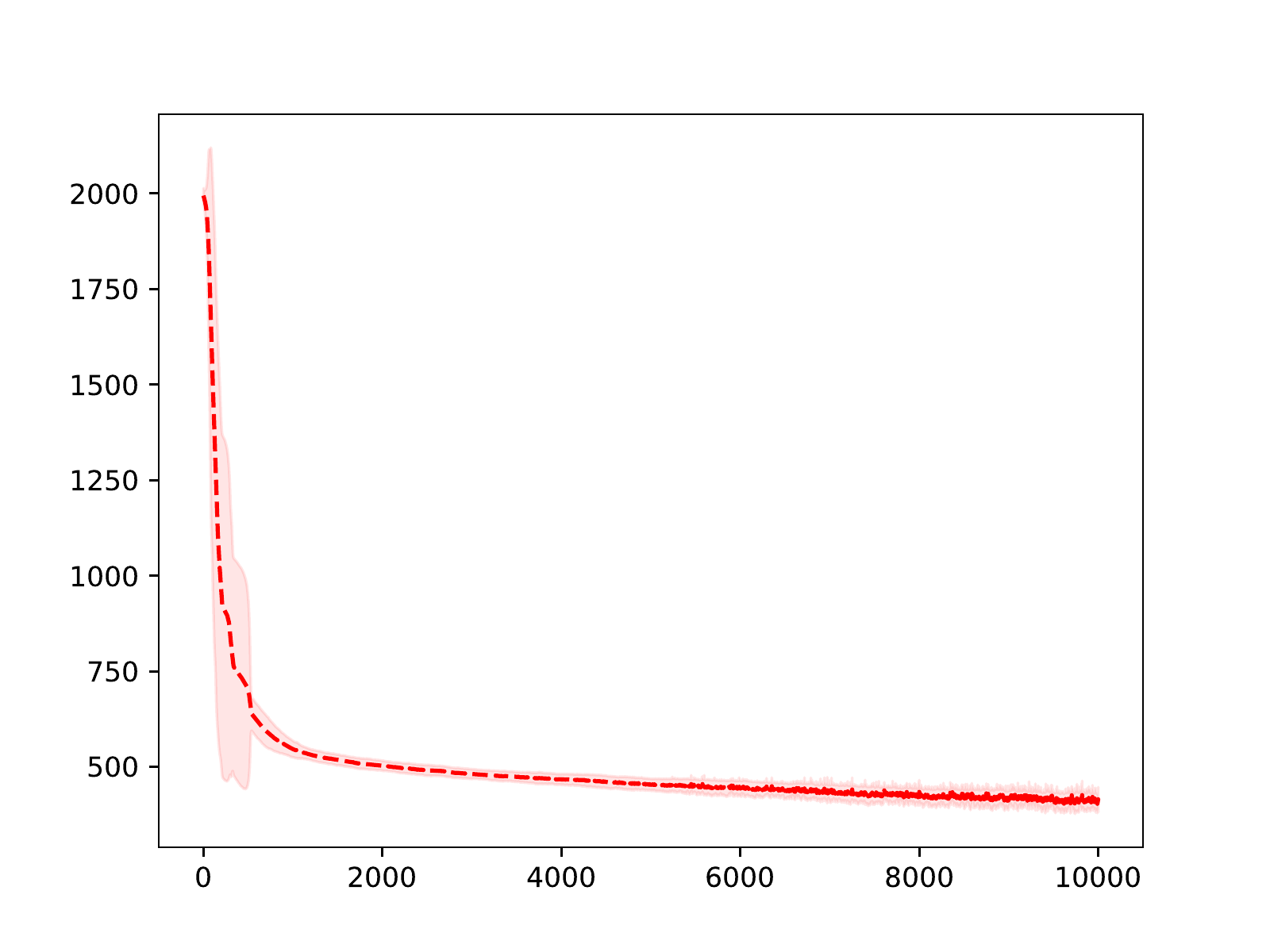}
        \caption{Test Loss on the sampled parameters generated by the unadjusted Langevin method on a ReLU Neural Nework on a regression task for the dataset \texttt{YearPredictionMSD}. The discretization rate is 1e-05.}
        \label{fig:reluresult2}
    \end{figure}
\end{center}

\section{Discussion and Implications}\label{sec:conc}
The standard potential gradient diffusion process has featured prominently in the theoretical analysis meant to give insight as to the approximation and generalization performance associated with the long-term behavior of SGD as applied to neural networks, for example,~\cite{hu2017diffusion}. It has also featured prominently in algorithms for sampling high-dimensional data sets, e.g.~\cite{bussi2007accurate}. It is standard for these studies to require that the drift term is Lipschitz and, as such, that the potential is continuously differentiable. In many contemporary applications, for example in (Bayesian, in the case of sampling) Neural Networks with ReLU or convolutional layers, this is not the case, and the presence of points wherein the loss function is not continuously differentiable is endemic. Therefore, the primary results of this paper, the existence of a solution to the stochastic differential inclusion drift as well as the existence of a Fokker-Planck equation characterizing the evolution of the probability distribution which asymptotically converges to a Gibbs distribution of the free energy, provide some basic insights into these processes without unrealistic assumptions. Specifically, even with these nonsmooth elements as typically arising in Whitney-stratifiable compositions of model and loss criteria, the overall understanding of the asymptotic macro behavior of the algorithmic processes remains as expected.

Additional insights gathered from studying an approximating SDE are not as straightforward to extend to the differential-inclusion setting. 
The issues of wide and shallow basins around minima seem minor, when one considers that a Hessian of a loss function may not exist at certain points. Thus, considerations of mixing rate for sampling and qualitative properties of limiting distributions are interesting topics for further study in specific cases of specific stochastic differential inclusions.



\acks{
We'd like to thank Thomas Surowiec for helpful discussions regarding PDE theory as related to the arguments in this paper. FVD has been supported by \textit{REFIN} Project, grant number 812E4967 funded by Regione Puglia; he is also part of the INdAM-GNCS research group. VK and JM were supported by the OP VVV project CZ.02.1.01/0.0/0.0/16\_019/0000765 ``Research
Center for Informatics", 
by the European Union’s Horizon Europe research and innovation programme under grant agreement No. GA 101070568 (Human-compatible AI with guarantees),
and by the
Czech Science Foundation (22-15524S).
}




\newpage








\vskip 0.2in
\bibliography{refs}

\end{document}